\numberwithin{equation}{section}
\newtheorem{theorem}{Theorem}[section]
\newtheorem{lemma}[theorem]{Lemma}
\newtheorem{proposition}[theorem]{Proposition}
\newtheorem{corollary}[theorem]{Corollary}
\theoremstyle{definition}
\newtheorem{assumption}{Assumption}[section]
\newtheorem{definition}{Definition}[section]
\theoremstyle{remark}
\newtheorem{remark}{Remark}[section]
\newcommand{\red}[1]{{\color{black}#1}}
\newcommand\bR{\mathbb{R}}
\newcommand\bP{\mathbb{P}}
\newcommand\cE{\mathcal{E}}
\newcommand\cF{\mathcal{F}}
\newcommand{\bT}{\mathbb{T}}
\newcommand{\bN}{\mathbb{N}}
\newcommand{\wto}{\rightharpoonup}
\newcommand{\tq}{\tilde{q}}
\newcommand*{\beq}{\begin{equation}}
\newcommand*{\eeq}{\end{equation}}
\newcommand{\E}{\mathbb{E}}
\newcommand{\R}{\mathbb{R}}
\newcommand{\bit}{\begin{itemize}}
\newcommand{\eit}{\end{itemize}}
\newcommand\D{\partial}
\newcommand{\supp}{\text{supp}\,}
\newcommand{\tu}{{\tilde{u}}}
\newcommand{\tA}{{\tilde{A}}}
\newcommand{\tPsi}{{\tilde{\Psi}}}
\newcommand{\eps}{\varepsilon}
\newcommand{\T}{\mathbb{T}}
\newcommand{\fra}{\mathfrak{a}}
\newcommand{\tfra}{{\tilde{\mathfrak{a}}}}
\renewcommand{\le}{\lesssim}
\newcommand{\tsigma}{{\tilde\sigma}}
\newcommand{\txi}{{\tilde\xi}}
\newcommand{\N}{\mathbb{N}}
\DeclareMathOperator*{\esssup}{ess\,sup}
\begin{document}

\title{Entropy solutions for Stochastic Porous Media Equations}
\author{K. Dareiotis, M. Gerencs\'er, and B. Gess}

\address[K. Dareiotis]{Max Planck Institute for Mathematics in the Sciences, Inselstrasse 22, 04103 Leipzig, Germany}
\email{konstantinos.dareiotis@mis.mpg.de}

\address[M. Gerencs\'er]{Institute of Science and Technology Austria, 
Am Campus 1, 
A – 3400 Klosterneuburg, Austria}
\email{mate.gerencser@ist.ac.at}

\address[B. Gess]{Max Planck Institute for Mathematics in the Sciences, Inselstrasse 22, 04103 Leipzig and Faculty of Mathematics,
University of Bielefeld,
33615 Bielefeld,
Germany}
\email{benjamin.gess@gmail.com}

\begin{abstract}
We provide an entropy formulation for porous medium-type equations with a stochastic, non-linear, spatially inhomogeneous forcing. 
Well - posedness and $L_1$-contraction is obtained in the class of entropy solutions.
Our scope allows for porous medium operators $\Delta (|u|^{m-1}u)$ for all $m\in(1,\infty)$, and H\"older continuous diffusion nonlinearity with exponent $1/2$.
\end{abstract}

\maketitle

\section{Introduction}
We consider degenerate quasilinear stochastic partial differential equations (SPDEs), with the stochastic porous medium equation
\begin{equs}         
d u(t,x) &=  \Delta (|u(t,x)|^{m-1}u(t,x)) \, dt + \sum_{k=1}^\infty\sigma^k(x,u(t,x)) \, d\beta^k(t) &\quad&\text{on }(0,T)\times\T^d, \\
u(0,x) &=\xi(x) &\quad&\text{on }\T^d,   \label{eq:intro_PME}
\end{equs}
for $m\in (1,\infty)$ as a model example. Here $T>0$ denotes the time horizon, $\bT^d$ is the $d$-dimensional torus, $\beta^k$ are independent Wiener processes and $\xi \in L^{m+1}(\bT^d)$. Assuming sufficient regularity on $\sigma^k$ we prove the well-posedness of entropy solutions to \eqref{eq:intro_PME} (cf.  Theorem \ref{thm:main} below). This is the first time that well-posedness for \eqref{eq:intro_PME} can be shown in the full range $m\in (1,\infty)$.

Equation \eqref{eq:intro_PME} is viewed as a special case of a class of SPDE of the type
\begin{equation}
\begin{aligned}          \label{eq:main_equation}
d u(t,x) &=  \Delta A(u(t,x)) \, dt + \sum_{k=1}^\infty\sigma^k(x,u(t,x)) \, d\beta^k(t) &\quad&\text{on }(0,T)\times\T^d, \\
u(0,x) &=\xi(x) &\quad&\text{on }\T^d.
\end{aligned}
\end{equation}
In addition to well-posedness, we show the stability of the solution map of \eqref{eq:main_equation}
with respect to the nonlinearity $A$,  the diffusion terms $\sigma^k$, and the initial condition $\xi$ (cf. Theorem \ref{thm:stability} below).  More precisely, assuming that $A_n$, $\sigma^{k}_n$, and $\xi_n$ satisfy appropriate regularity assumptions and converge to $A$, $\sigma^k$, and $\xi$, respectively,  we show that the corresponding solutions to \eqref{eq:main_equation} converge in $L^1(\Omega\times(0,T)\times\T^d)$.

Stochastic porous media equations \eqref{eq:intro_PME} have attracted considerable interest in recent years (cf.\ e.g.\ \cite{ROK4, ROK2, ROK3, ROK1, GH18, DHV16, GS16-2, Wit} and the references therein) and different approaches, for example based on monotonicity in $H^{-1}$, based on entropy solutions and based on kinetic solutions have been developed.
When applied to the case of Nemytskii type diffusion coefficients as in \eqref{eq:intro_PME}, the established results either lead to strong assumptions on the diffusion coefficients $\sigma^k$ (cf.\ Section \ref{sec:lit} below for more details), or the porous medium equation could not be treated in its full regime\footnote{It should be noted that in other regards, e.g.~boundary data, lower order terms etc., some of the mentioned works address more difficult situations than considered here. We focus only on their applicability to \eqref{eq:intro_PME}.}, e.g.\ \cite{Wit,GH18} were restricted to $m>2$. It is one of the main contributions of this work to dispense of this restriction and to treat \eqref{eq:intro_PME} in the full range $m\in (1,\infty)$  under mild assumptions on the Nemytskii type diffusion coefficients $\sigma$.

The approach to \eqref{eq:intro_PME} developed in this work relies on a notion of entropy solutions similar to \cite{KARLSEN} in the deterministic case. In the class of entropy solutions, we prove an $L_1$-contraction estimate, as well as a generalized $L^1$-stability estimate. The key point of the proof is the derivation of sharp bounds on the errors introduced by variable doubling in the proof of uniqueness/stability. This relies on a careful analysis of the degenerate behavior of the porous medium operator at small values of $u$, by means of an interpolation argument. Due to the aforementioned stability estimates, in the proof of existence of entropy solutions we are able to pass to the limit in approximating equations without using any compactness argument. In particular, unlike many previous works mentioned below, this argument does not rely on probabilistically weak solutions, which may prove useful when dealing with equations with random coefficients.

\subsection{Literature}\label{sec:lit}
Equations of the type \eqref{eq:main_equation} and stochastic porous media equation in particular have attracted a lot of interest in the recent years. Given the vast literature on the subject, we will only mention some of the most relevant works for the present paper and refer to the monographs \cite{ROK3,LR15,DPZ14,ROK1} for a more complete account of the literature.

In \cite{P75,KR79,ROK4, ROK3} a monotone operator approach is employed based on the pivot space $H^{-1}$. When applied to the particular case of Nemytskii type diffusion coefficients, the resulting general abstract conditions could be verified if $\sigma^k$ are affine linear functions of $u$, since otherwise the maps $u \mapsto \sigma^k(u)$ are not known to be Lipschitz continuous in $H^{-1}$, even if $\sigma^k$ is smooth. 
In order to relax this assumption on $\sigma^k$, alternative approaches based on $L^1$-techniques have been introduced. In the deterministic setting, this has been realized via the theory of accretive operators going back to Crandall-Ligget \cite{CL71}  (cf.\ e.g.\ \cite{B10}), 
entropy solutions due to Otto \cite{Otto}, Kru\v{z}kov \cite{KRUZKOV} and kinetic solution by Lions, Perthame, Tadmor \cite{LPT94} and Chen, Perthame \cite{CHEN}. For continuation in this direction  we refer to \cite{Carr}, \cite{KARLSEN}, \cite{BR18} and references therein.  

In the stochastic setting, an entropy formulation was first introduced in \cite{KIM} for stochastic conservation laws.  Works that followed in this direction include \cite{Witt2, FENG, Biswas, KARLSEN2}. A kinetic solution theory was developed in \cite{DV10,DHV16,GH18,GS16-2, FG18,FG17}. For existing work concerning kinetic/entropy solutions to stochastic degenerate parabolic equations we refer  to \cite{HOF1, DHV16}, and to the more recent works \cite{Wit, GH18}. In \cite{Wit}, $A$ is assumed to be globally Lipschitz-continuous. Moreover, when $\sigma$ is Lipschitz continuous in $(x,u)$, a behaviour $A(u) \sim |u|^{m-1}u$ near the origin is allowed only for $m>2$. 
In \cite{GH18}, the condition on the boundedness of $A'$ is dropped, and by using a kinetic formulation,  well-posedness is proved under the condition that $\sigma$ is Lipschitz in $(x,u)$ and $\sqrt{A'(u)}$ is $\gamma$-H\"older continuous with $\gamma>1/2$. In particular, in the case of the porous medium operator, this forces the condition $m>2$. 

A different approach to stochastic porous media equations  based on $L^1$-techniques and also allowing for a first order divergence part has recently been developed in \cite{BR17}.

At last, the interested reader may find various and far reaching treatments of stochastic porous media equations in different settings (various types of boundary conditions, general nonlinearities, nonlocal diffusions, different driving processes, regularity) in the works \cite{ROK1,ROK4,B15,BR15-2,BR15,GT14,G12,BR12,B10,BM09} and the references therein.

\subsection{Notation}
We fix a filtered probability space $(\Omega,(\cF_t)_{t\in[0,T]},\mathbb{P})$, carrying an infinite sequence of
independent Wiener processes $(\beta^k(t))_{k\in\N,\,t\in[0,T]}$.
Introduce the shorthand notations $\Omega_T=\Omega\times[0,T]$, $Q_T=[0,T]\times\T^d$.
Lebesgue and Sobolev spaces are denoted in the usual way by $L_p$ and $W^k_p$, respectively.
When a function space is given on $\Omega$ or $\Omega_T$,
we understand it to be defined with respect to $\cF:=\cF_T$ and the predictable $\sigma$-algebra, respectively. 
In all other cases the usual Borel $\sigma$-algebra will be used.

We fix a non-negative smooth function $\rho:\R\mapsto \R$ which is bounded by $2$, supported in $(0,1)$, and integrates to $1$.
We use the notation $\rho_\theta(r)=\theta^{-1}\rho(\theta^{-1}r)$, which will most often be our mollifier sequence.
When smoothing in time, the property that $\rho$ is supported on positive times will be very convenient.
For spatial regularisation this will be irrelevant, but for the sake of simplicity, we often use $\rho_\theta^{\otimes d}$ for smoothing in space as well.

We will encounter many multiple integrals on a regular basis. To shorten notation, we often write $\int_t$ in place of $\int_0^T\,dt$ (and similarly for $\int_s$),
as well as $\int_x$ in place of $\int_{\T^d}\,dx$ (and similarly for $\int_y$), and $\int_a$ in place of $\int_\R\,da$.
Whenever the integral is taken on a different domain, with respect to a different variable, or is a stochastic integral, we use the usual notation to avoid confusion.
In the proofs we will often use the notation $a \le b$ for $a ,b \in \bR$, which means $a \leq N b$ for some constant $N\geq 0$, the dependence of which on certain parameters is specified in the corresponding 
statement.
Summation with respect to repeated indices (most commonly over $k\in\N$) is often used.

\section{Formulation and main results}
We denote by $\cE(A,\sigma,\xi)$ the Cauchy problem \eqref{eq:main_equation} and set 
\begin{equ}
\fra(r)=\sqrt{A'(r)},\qquad\Psi(r)=\int_0^r\fra(\zeta)\,d\zeta.
\end{equ}
Following \cite{CHEN} we formulate conditions on the nonlinearity $A$ via assumptions on $\Psi$, with some constants $m>1$, $K\geq 1$, which we consider fixed throughout the article.
We also fix $\kappa\in(0,1/2]$, and $\bar\kappa\in((m\wedge2)^{-1},1]$, standing for the regularity exponents for $\sigma$.

\begin{assumption}\label{as:A}
The following hold:
\begin{enumerate}[(a)]
\item \label{as:A first}
The function $A:\R\mapsto\R$ is differentiable, strictly increasing and odd. The function $\fra$ is differentiable away from the origin, and satisfies the bounds
\begin{equ}\label{eq:as fra}
|\fra(0)|\leq K,
%\quad|\fra(r)-\fra(0)|\leq K|r|^{\frac{m-1}{2}},
\quad |\fra'(r)|\leq K|r|^{\frac{m-3}{2}}\quad\text{if}\ r>0,
\end{equ}
as well as
\begin{equation}\label{eq:as Psi}
K\fra(r)\geq I_{|r|\geq 1},\quad
  K|\Psi(r)-\Psi(\zeta)|\geq\left\{
  \begin{array}{@{}ll@{}}
    |r-\zeta|, & \text{if}\ |r|\vee|\zeta|\geq 1, \\
    |r-\zeta|^{\frac{m+1}{2}}, & \text{if}\ |r|\vee|\zeta|< 1.
  \end{array}\right.
\end{equation} 

\item \label{as:ic}
The initial condition $\xi$ is an $\mathcal{F}_0$-measurable $L_{m+1}(\bT^d)$-valued random variable such that $\E \| \xi \|_{L_{m+1}(\bT^d)}^{m+1}< \infty$.

\end{enumerate}
\end{assumption}

\begin{assumption}   \label{as:noise}
 One of the following holds:
\begin{enumerate}[(a)]
\item  \label{as:sigma}The function $\sigma:\T^d\times\R\mapsto \red{l}_2$
satisfies the bounds, for all $r\in\R$, $\zeta\in[r-1,r+1]$, $x,y\in\T^d$,
\begin{equs}
|\sigma(x,r)|_{l_2}&\leq K(1+|r|),
\\
|\sigma(x,r)-\sigma(y,\zeta)|_{l_2}&\leq K|r-\zeta|^{1/2+\kappa}+K\red{(1+|r|)}|x-y|^{\bar\kappa}.
\end{equs}

\item \label{as:sigma sqrt}
The function $\sigma:\R\mapsto\ell_2$ satisfies the bounds, for all $r\in\R$, $\zeta\in[r-1,r+1]$,
\begin{equ}
|\sigma(r)|_{l_2}\leq K(1+|r|),\quad |\sigma(r)-\sigma(\zeta)|_{l_2}\leq |r-\zeta|^{1/2}.
\end{equ}

\end{enumerate}
\end{assumption}

Let us now introduce the definition of entropy solutions. Set, for $f \in C(\bR)$,
$$
\Psi_f(r) := \int_0^r f(\zeta) \fra(\zeta) \, d\zeta,% \qquad \Psi(r):= \Psi_1(r).
$$
so that $\Psi=\Psi_1$.
\begin{definition} \label{def:solution}
An entropy solution of $\cE(A,\sigma,\xi)$ is 
a predictable stochastic process $u : \Omega_T \to L_{m+1}(\bT^d)$ such that
\begin{enumerate}[(i)]

\item  \label{item:in-Lm} $u \in  L_{m+1}(\Omega_T; L_{m+1}(\bT^d))$ 

\item \label{item:chain_ruleW2} For all $f \in C_b(\bR)$ we have $\Psi_f(u) \in L_2(\Omega_T;H^1(\bT^d)$) and 
\begin{equation*}      
\D_i \Psi_f(u)= f(u) \D_i \Psi(u).
\end{equation*}

\item \label{item:entropies}For all convex $\eta\in C^2(\bR)$ with $\eta''$ compactly supported and all  $\phi\geq 0 $ of the form $\phi= \varphi \varrho$ with $\varphi \in C_c([0,T))$, $\varrho \in C^\infty(\bT^d)$,   we have 
almost surely
\begin{align}      
\nonumber          
-\int_0^T \int_{\bT^d} \eta(u) \D_t\phi \, dx dt   &\leq \int_{\bT^d} \eta(\xi) \phi(0)\, dx+\int_0^T \int_{\bT^d} q_\eta(u) \Delta \phi  \, dx dt
\\
\nonumber   
&+\int_0^T \int_{\bT^d}  \left( \frac{1}{2} \phi \eta''(u)|\sigma(u)|_{l_2}^2-\phi \eta''(u) | \nabla \Psi(u)|^2 \right) \,  dx dt  
\\   \label{eq:entropy_inequality}
&+ \int_0^T \int_{\bT^d} \phi \eta'(u)\sigma^k(u) \, dx  d\beta^k(t),
\end{align}
where $q_\eta$ is any function satisfying $q_\eta'(\zeta)=\eta'(\zeta)\fra^2(\zeta)$.
\end{enumerate}
\end{definition}

\begin{remark}
Note that with the particular choices $\eta(r)=\pm r$, \eqref{eq:entropy_inequality} implies that any entropy solution satisfies  equation \eqref{eq:main_equation} in the weak (in both space and time) sense.
\end{remark}

Our main result now states the well-posedness of $\cE(A,\sigma,\xi)$ as follows.

\begin{theorem}\label{thm:main}
Let $A$ and $\xi$ satisfy Assumption \ref{as:A} and let $\sigma$ satisfy  Assumption \ref{as:noise}.
Then there exists a unique entropy solution  to \eqref{eq:main_equation} with initial condition $\xi$.  Moreover, if $\tu$ is the unique entropy to solution \eqref{eq:main_equation} with initial condition $\tilde{\xi}$,  then 
\begin{equ}\label{eq:main contraction}
\esssup_{t\in[0,T]}\E\int_{\bT^d}|u(t,x)-\tu(t,x)|\, dx \leq\E\int_{\bT^d}|\xi(x)-\txi(x)|\, dx .
\end{equ}
\end{theorem}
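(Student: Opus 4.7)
The natural strategy is to establish the $L^1$-contraction \eqref{eq:main contraction} first, since uniqueness follows immediately by taking $\xi=\txi$, and then to construct an entropy solution by passing to the limit in a regularised equation, using the contraction (or a stability extension of it, in the spirit of Theorem \ref{thm:stability}) to identify the limit without any compactness argument.

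For the contraction I would adapt Kru\v{z}kov's doubling-of-variables method to the stochastic setting. Let $u=u(t,x)$ and $\tu=\tu(s,y)$ be two entropy solutions with initial data $\xi$ and $\txi$. Approximate the Kru\v{z}kov entropy $r\mapsto|r-c|$ by a smooth convex $\eta_\delta\in C^2$ with $\eta_\delta''$ compactly supported and concentrating at the origin as $\delta\to 0$. Apply \eqref{eq:entropy_inequality} to $u$ at level $c=\tu(s,y)$ with test function $\phi(t,x)=\varphi(t)\rho_\theta^{\otimes d}(x-y)$ and symmetrically to $\tu$ with $c=u(t,x)$, then take expectations, integrate the two inequalities against the complementary variables, and bridge the two time variables by a mollifier $\rho_\tau(t-s)$. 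Since $c$ is itself random and time-dependent, the martingale terms do not cancel when expectations are taken; an It\^o product rule between $\eta_\delta'(u(t,x)-\tu(s,y))$ and the two stochastic integrals is required, producing the crucial cross correction $\eta_\delta''(u-\tu)|\sigma(x,u)-\sigma(y,\tu)|_{l_2}^2$ on top of the two self-interaction quadratic variations already encoded in \eqref{eq:entropy_inequality}. Passing $\tau\to 0$, then $\theta\to 0$, and finally $\delta\to 0$, together with a Gr\"onwall argument, yields \eqref{eq:main contraction}.

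The decisive difficulty is to absorb the noise correction by the parabolic dissipation as $\theta,\delta\to 0$. Summing the two entropy inequalities, the parabolic side contributes (modulo $O(\theta)$ boundary errors handled via integration by parts) a nonpositive term morally of the form $-\int \eta_\delta''(u-\tu)|\nabla\Psi(u)-\nabla\Psi(\tu)|^2\phi$, while the It\^o cross correction supplies $\tfrac12\int \eta_\delta''(u-\tu)|\sigma(x,u)-\sigma(y,\tu)|_{l_2}^2\phi$. Under case (b) of Assumption \ref{as:noise}, the latter quantity is a genuine $O(1)$ term, so the match must be sharp. On $\{|u|\vee|\tu|\geq 1\}$ the second line of \eqref{eq:as Psi} yields $|\Psi(u)-\Psi(\tu)|\gtrsim|u-\tu|$, and the parabolic dissipation easily dominates the noise correction. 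Near the origin, however, $\Psi^{-1}$ is only $\tfrac{2}{m+1}$-H\"older, and this is exactly what has forced $m>2$ in previous work. To bypass this, I would interpolate: using part (ii) of Definition \ref{def:solution} one has $\Psi(u)\in H^1$, so a Gagliardo--Nirenberg-type interpolation combined with the uniform $L_{m+1}$ bound on $u$ should dominate the shortfall by small positive powers of $\theta$ and $\delta$, provided $\kappa>0$ and $\bar\kappa>(m\wedge 2)^{-1}$. This interpolation step is the principal obstacle, and is exactly where the sharp regularity exponents of Assumption \ref{as:noise} become indispensable.

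For existence I would regularise: take $A_n(r)=A(r)+n^{-1}r$, choose Lipschitz $\sigma_n\to\sigma$ (by convolution in $u$ in case (b)) and bounded $\xi_n\to\xi$. The regularised problem $\cE(A_n,\sigma_n,\xi_n)$ is strongly monotone in $H^{-1}$ with Lipschitz noise and admits a unique variational solution $u_n$ in the Krylov--Rozovskii framework. An It\^o formula for $\tfrac{1}{m+1}\|u_n\|_{L_{m+1}}^{m+1}$ delivers uniform bounds on $u_n$ in $L_{m+1}(\Omega_T\times\bT^d)$ and on $\nabla\Psi_n(u_n)$ in $L_2$, and a direct computation shows that $u_n$ satisfies \eqref{eq:entropy_inequality} for the regularised data. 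Applying the contraction argument to pairs $(u_n,u_m)$, with the additional errors coming from the differences $A_n-A_m$, $\sigma_n-\sigma_m$, $\xi_n-\xi_m$ controlled in the spirit of Theorem \ref{thm:stability}, shows that $(u_n)$ is Cauchy in $L^1(\Omega_T\times\bT^d)$. Lower semicontinuity of the dissipation terms then allows passage to the limit in \eqref{eq:entropy_inequality}, yielding the required entropy solution.
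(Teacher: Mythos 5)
Your overall architecture (doubling of variables for the contraction, regularised equations plus the stability estimate for existence) matches the paper's, but two of the steps you wave through are precisely where the real work lies, and as stated they do not go through.

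First, the stochastic cross term. You write that ``an It\^o product rule between $\eta_\delta'(u(t,x)-\tu(s,y))$ and the two stochastic integrals is required, producing the crucial cross correction''. For a general entropy solution this computation is not available: $u$ is not known to be a semimartingale in any usable sense, and substituting the anticipative quantity $u(t,x)$ into a stochastic integral $\int_{t-\theta}^t\cdots\,d\beta^k(s)$ has no a priori meaning or expectation formula. The paper resolves this by (a) using a \emph{one-sided} mollifier in time, so that one of the two cross terms ($B_7$) vanishes exactly by adaptedness, and (b) introducing the $(\star)$-property (Definition \ref{def:star}, a ``strong entropy solution'' condition) to control the other cross term ($B_8$); this property is then verified only for solutions obtained as limits of non-degenerate approximations (Lemma \ref{lem:strong entropy} and Corollary \ref{cor}). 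Consequently the contraction is proved between one solution having the $(\star)$-property and an \emph{arbitrary} entropy solution --- an asymmetry your symmetric doubling argument does not accommodate. Without some substitute for this mechanism your proof of \eqref{eq:main contraction} for two arbitrary entropy solutions has a genuine hole.

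Second, the interpolation that unlocks $m\in(1,2]$. You correctly identify the degeneracy of $\fra$ near the origin as the principal obstacle, but the Gagliardo--Nirenberg interpolation you propose (using $\Psi(u)\in H^1$ and $u\in L_{m+1}$) is not what closes the argument, and you leave it at ``should dominate''. In the paper, the parabolic dissipation is used to complete a square with the second-order terms, leaving the error $\int\int\eta_\delta''(\zeta-r)|\fra(r)-\fra(\zeta)|^2$, which is controlled by the \emph{pointwise} interpolation $|\fra(r)-\fra(\zeta)|\lesssim\delta^{\alpha}|\zeta|^{\frac{m-1}{2}-\alpha}$ for $|r-\zeta|\le\delta$ (combining the derivative bound in \eqref{eq:as fra} with the trivial bound), valid for $\alpha<1\wedge(m/2)$ so that $|\zeta|^{m-1-2\alpha}$ stays integrable at the origin. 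This yields an error $\eps^{-2}\delta^{2\alpha}$ that is balanced against the noise error $\eps^{2\bar\kappa}\delta^{-1}$ by the choice $\delta=\eps^{2\nu}$ with $\nu\in((m\wedge2)^{-1},\bar\kappa)$ --- exactly where the hypothesis $\bar\kappa>(m\wedge2)^{-1}$ enters. Note also that the noise correction is \emph{not} absorbed by the parabolic dissipation, as you suggest; it is estimated directly from the H\"older assumption on $\sigma$ (giving $\delta^{2\kappa}+\eps^{2\bar\kappa}\delta^{-1}$), and in the $1/2$-H\"older case (b) one must additionally replace $\eta_\delta$ by a Yamada--Watanabe-type entropy with $\eta_\delta''(r)\sim(|r|\,|\log\delta|)^{-1}$, which your sketch does not supply. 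The role of $\Psi(u)\in W^1_1$ is the more modest one of Lemma \ref{lem:frac reg}: undoing the spatial doubling at cost $\eps^{2/(m+1)}$.
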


We also show stability of the solution map with respect to the coefficients, in the following sense.
\begin{theorem}\label{thm:stability}
Let $(A_n)_{n\in\N}$ and $(\xi_n)_{n\in\N}$ satisfy Assumption \ref{as:A} uniformly in $n$, and let $(\sigma_n)_{n\in\N}$ satisfy either Assumption \ref{as:noise} (\ref{as:sigma}) or (\ref{as:sigma sqrt}) uniformly in $n$.
Assume furthermore that, for $n \to \infty$,  $A_n\to A$ uniformly on compact sets, $\xi_n\to\xi$ in $L_{m+1}(\Omega\times\T^d)$, and \red{$\sup_{x,r}(1+|r|)^{-(m+1)}|\sigma_n(x,r)-\sigma(x,r)|^2_{l_2}\to 0$}.
Let $u_n, u$ be  the entropy solutions of $\cE(A_n,\sigma_n,\xi_n)$, $\cE(A,\sigma,\xi)$, respectively.
Then $u_n\to u$ in $L_1(\Omega_T\times\T^d)$, as $n \to \infty$.
\end{theorem}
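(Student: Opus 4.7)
The plan is to derive Theorem \ref{thm:stability} as a direct consequence of the generalised $L_1$-type stability estimate for entropy solutions of \emph{different} Cauchy problems that already underlies the contraction \eqref{eq:main contraction} in Theorem \ref{thm:main}. Concretely, revisiting the doubling-of-variables argument without the assumption that the two equations coincide, but keeping the discrepancies between $(A_n,\sigma_n)$ and $(A,\sigma)$ explicit, should yield an inequality of the schematic form
\begin{equation*}
\esssup_{t\in[0,T]}\E\int_{\bT^d}|u_n(t,x)-u(t,x)|\,dx
\le \E\int_{\bT^d}|\xi_n-\xi|\,dx + \cR_n,
\end{equation*}
with a remainder $\cR_n$ depending on moduli of $A_n-A$ and $\sigma_n-\sigma$ and on energy-type norms of $u_n$ and $u$. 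Integrating in time then delivers the desired convergence in $L_1(\Omega_T\times\bT^d)$, provided each term on the right vanishes as $n\to\infty$.

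Before invoking this estimate, I would secure uniform a priori bounds on $(u_n)$. Using \eqref{eq:entropy_inequality} with convex approximations of $\eta(r)=|r|^{m+1}$, whose second derivative becomes compactly supported after truncation, and exploiting that Assumption~\ref{as:A}\eqref{as:A first} holds uniformly in $n$, standard manipulations together with It\^o's isometry for the martingale term yield
\begin{equation*}
\sup_n\Big(\esssup_{t\in[0,T]}\E\|u_n(t)\|_{L_{m+1}(\bT^d)}^{m+1}+\E\int_0^T\|\nabla\Psi_n(u_n)\|_{L_2(\bT^d)}^2\,dt\Big)<\infty,
\end{equation*}
where $\Psi_n$ is built from $A_n$ via $\fra_n=\sqrt{A_n'}$. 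The analogous bound for $u$ is immediate.

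With these bounds in hand, passing $\cR_n\to 0$ is comparatively routine for the contributions that do not stem from $A_n-A$. The $\xi_n$-term vanishes since $L_{m+1}$ convergence on the torus implies $L_1$ convergence, and the $\sigma_n$-term follows by combining uniform convergence $\sigma_n\to\sigma$ on $\bT^d\times[-M,M]$ with the uniform $L_{m+1}$-estimate, truncating at level $M$ and sending $M\to\infty$ via Chebyshev. The more delicate piece is the $A_n-A$ contribution: one splits the integration on $\{|u_n|\vee|u|\leq M\}$, uses uniform convergence $A_n\to A$ on $[-M,M]$, and controls the tail $\{|u_n|\vee|u|>M\}$ again by Chebyshev in $L_{m+1}$.

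The main obstacle, anticipated already in the introduction, is producing the generalised $L_1$ estimate with a sufficiently sharp dependence on $A_n-A$. The degeneracy of $\fra$ at the origin, captured by the two regimes in \eqref{eq:as Psi}, forces an interpolation between the Lipschitz behaviour of $\Psi$ for $|r|\geq 1$ and its $\tfrac{m+1}{2}$-H\"older behaviour for $|r|<1$; it is precisely this interpolation step that must be calibrated so that $\cR_n$ is expressible as a modulus of continuity of $A_n-A$ on compacts multiplied by quantities uniformly controlled by the energy estimates above. Granted this sharp bound, the three convergences combine to give $u_n\to u$ in $L_1(\Omega_T\times\bT^d)$.
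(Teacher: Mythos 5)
Your overall strategy is the one the paper follows: the ``generalised $L_1$-type stability estimate'' you postulate is exactly Theorem \ref{thm:uniqueness}(ii) (and its counterpart Theorem \ref{thm:uniqueness sqrt}(ii)), i.e.\ \eqref{eq:super inequality}, in which the discrepancy of the nonlinearities enters through the radius $R_\lambda$ of \eqref{eq:R lambda} together with tail terms handled by uniform integrability, and the uniform energy bounds you list are \eqref{eq:uniform2}. Two small corrections: the generalised estimate controls only $\E\int_{t,x}|u-\tu|$, not the essential supremum in time (this is enough for the claimed $L_1(\Omega_T\times\T^d)$ convergence), and the interpolation between the Lipschitz and $\tfrac{m+1}{2}$-H\"older regimes of $\Psi$ is used to remove the mollification in $x$ (Lemma \ref{lem:frac reg}), while the $A_n-A$ dependence is treated separately in \eqref{eq:C1through4}.

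The genuine gap is that you never address the $(\star)$-property (Definition \ref{def:star}). Doubling variables for two \emph{different} solutions produces the anticipating cross-term ($B_8$ in \eqref{eq:everything}), in which $u(t,x)$ is substituted into a stochastic integral over $[t-\theta,t]$; passing to the limit $\theta\to0$ in this term is exactly what the $(\star)$-property provides, and it is not known to hold for an arbitrary entropy solution. Accordingly, Theorem \ref{thm:uniqueness} assumes that one of the two solutions has this property, and the solutions $u_n$ of $\cE(A_n,\sigma_n,\xi_n)$ in the statement of Theorem \ref{thm:stability} need not be known to possess it. The paper's proof circumvents this by truncating the data: it sets $\hat\xi_n=-N_0\vee(\xi_n\wedge N_0)$ with $\E\|\hat\xi_n-\xi_n\|_{L_1(\T^d)}\le 1/n$, so that the solution $\hat u_n$ of $\cE(A_n,\sigma_n,\hat\xi_n)$ produced by the approximation scheme has the $(\star)$-property (Lemma \ref{lem:strong entropy} requires fourth moments of $\|\xi\|_{L_2(\T^d)}$, hence the truncation), controls $\E\|\hat u_n-u_n\|_{L_1(Q_T)}\le T/n$ by the contraction \eqref{eq:main contraction}, and only then applies \eqref{eq:super inequality} to the pair $(\hat u_n,u)$. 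Without this step, or an equivalent device, the central inequality of your plan is not available for the given $u_n$, and the rest of the argument does not get off the ground.
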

%\begin{example}
%Consider the equation
%\begin{equ}
%du=\nabla\cdot (u\nabla u)\,dt+\sqrt{\big(u(1-u)\big)_+}\,d\beta(t)
%\end{equ}
%with some initial condition $u(0,\cdot)=\xi$ taking values in $[0,1]$.
%\end{example}

\section{Preliminaries, strong entropy solutions}

\subsection{Consequences of the setup}
Let us begin with a simple consequence of Definition \ref{def:solution}.

\begin{remark}             \label{rem:W1}
By Definition \ref{def:solution} \eqref{item:in-Lm},  \eqref{item:chain_ruleW2},  it follows that for any $f \in C(\bR)$ satisfying
$$
|f(r)| \leq N (1+ |r|^{\frac{m+1}{2}}),
$$
we have that $\Psi_f(u) \in L_1(\Omega_T;W^{1}_1(\bT^d))$ and 
\begin{equation}         \label{eq:derivative-G-psi}
\D_i\Psi_f(u)= f(u)\D_i\Psi(u).
\end{equation}
Indeed, let $f_n \in C_b(\bR)$ such that $f_n=f$ on $[-n,n]$ and $\sup_n |f_n(r)| \leq N(1+|r|^{\frac{m+1}{2}})$ for all $r \in \bR$. On the basis of Definition \ref{def:solution} \eqref{item:in-Lm},  \eqref{item:chain_ruleW2},  we have 
$$
\E \int_{t,x}| \Psi_{f_n}(u)-\Psi_{f}(u)| \leq N \E \int_{t,x} I_{|u|\geq n} (1+ |u|^{m+1}) \to 0,
$$
and
\begin{equ} \label{eq:derivative-G-conv} 
\E \int_{t,x}|\D_i \Psi_{f_n}(u)-f(u)\D_i\Psi(u)| \leq  N \E \int_{t,x} I_{|u|\geq n}\big(1+|\D_i\Psi(u)|^2 
+  |u|^{m+1}\big)  \to 0,
\end{equ}
as $n \to \infty$. %Hence, the claim follows.
Hence the sequence $\Psi_{f_n}(u)$ is Cauchy in $L_1(\Omega_T;W^1_1(\bT^d))$. Since its limit is
$\Psi_{f}(u)$
in  $L_1(\Omega_T;L_1(\bT^d))$, this also holds in $L_1(\Omega_T;W^1_1(\bT^d))$, and
\eqref{eq:derivative-G-conv} implies \eqref{eq:derivative-G-psi}.
\end{remark}
The fact that we can bound the derivative of $\Psi(u)$ for any solution will be particularly useful thanks to the following lemma.
\begin{lemma}\label{lem:frac reg}
Let Assumption \ref{as:A} hold, let $u\in L_1(\Omega\times Q_T)$ and for some $\eps\in (0,1)$, let $\varrho:\R^d\mapsto\R$ be a non-negative function integrating to $1$ and supported on a ball of radius $\eps$.
Then one has the bound
\begin{equ}\label{eq:frac reg}
\E\int_{t,x,y}|u(t,x)-u(t,y)|\varrho(x-y)\leq N \eps^{\frac{2}{m+1}}\big(1+\E\|\nabla\Psi(u)\|_{L_1(Q_T)}\big),
\end{equ}
where $N$ depends on $d, K$ and $T$.
\end{lemma}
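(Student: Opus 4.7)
The plan is to invert $\Psi$ pointwise via \eqref{eq:as Psi}, bounding $|u(t,x)-u(t,y)|$ by a function of $|\Psi(u(t,x))-\Psi(u(t,y))|$, and then reduce the integrated estimate to the standard mollifier bound
\[
\int_{x,y}|v(x)-v(y)|\varrho(x-y)\,dx\,dy\le \eps\|\nabla v\|_{L_1(\T^d)},\qquad v\in W^1_1(\T^d),
\]
which follows from writing $v(x)-v(y)=\int_0^1\nabla v(y+s(x-y))\cdot(x-y)\,ds$ and using $\mathrm{supp}\,\varrho\subset B_\eps$. Without loss of generality we may assume $\E\|\nabla\Psi(u)\|_{L_1(Q_T)}<\infty$, since otherwise \eqref{eq:frac reg} is trivial.

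First I would split the integrand by whether $M(t,x,y):=|u(t,x)|\vee|u(t,y)|$ is $\ge 1$ or $<1$. On $\{M\ge 1\}$ the first part of \eqref{eq:as Psi} gives $|u(t,x)-u(t,y)|\le K|\Psi(u(t,x))-\Psi(u(t,y))|$, and the mollifier bound applied to $v=\Psi(u(t,\cdot))$, followed by integration in $t$ and expectation, produces a contribution of size $N\eps\,\E\|\nabla\Psi(u)\|_{L_1(Q_T)}$, which is absorbed into the right-hand side of \eqref{eq:frac reg} since $\eps\le\eps^{2/(m+1)}$.

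On $\{M<1\}$ the second part of \eqref{eq:as Psi} only gives the Hölder-type estimate $|u(t,x)-u(t,y)|\le K^{2/(m+1)}|\Psi(u(t,x))-\Psi(u(t,y))|^{2/(m+1)}$. Here the key device is Jensen's inequality with the concave function $z\mapsto z^{2/(m+1)}$ applied to the finite measure $\varrho(x-y)\,dx\,dy\,dt\,d\mathbb{P}$ (of total mass $T|\T^d|$), which converts the pointwise Hölder bound into
\[
\E\int_{t,x,y}|\Psi(u(t,x))-\Psi(u(t,y))|^{\frac{2}{m+1}}\varrho(x-y)\le N\Bigl(\E\int_{t,x,y}|\Psi(u(t,x))-\Psi(u(t,y))|\varrho(x-y)\Bigr)^{\frac{2}{m+1}}.
\]
The inner $L_1$ integral is again controlled by $\eps\,\E\|\nabla\Psi(u)\|_{L_1(Q_T)}$ via the mollifier bound, yielding a contribution of size $N\eps^{2/(m+1)}\bigl(\E\|\nabla\Psi(u)\|_{L_1(Q_T)}\bigr)^{2/(m+1)}\le N\eps^{2/(m+1)}(1+\E\|\nabla\Psi(u)\|_{L_1(Q_T)})$, using $x^\alpha\le 1+x$ for $\alpha\in(0,1)$. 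Adding the two regions gives \eqref{eq:frac reg}.

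The essential difficulty, and the reason the exponent is $2/(m+1)$ rather than $1$, is the degeneracy of $A$ (equivalently, of $\fra=\sqrt{A'}$) at the origin: $\Psi$ fails to be bi-Lipschitz near $0$, and only admits a H\"older inverse with exponent $2/(m+1)$. Converting this pointwise H\"older control into an integrated bound of the same order in $\eps$ is precisely what the Jensen/concavity step accomplishes, and is the only non-routine point in the argument.
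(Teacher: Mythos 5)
Your proposal is correct and follows essentially the same route as the paper: split according to whether $|u(t,x)|\vee|u(t,y)|$ exceeds $1$, use the two-regime bound \eqref{eq:as Psi} to pass to $\Psi(u)$, apply Jensen/H\"older to extract the exponent $2/(m+1)$, and finish with the standard mollifier estimate $\E\int_{t,x,y}|\Psi(u(t,x))-\Psi(u(t,y))|\varrho(x-y)\leq\eps\,\E\|\nabla\Psi(u)\|_{L_1(Q_T)}$. The only (immaterial) difference is the order of operations — you invert $\Psi$ pointwise and then apply Jensen to the concave power, whereas the paper applies H\"older to $|u(t,x)-u(t,y)|^{(m+1)/2}$ first and then invokes \eqref{eq:as Psi} — and both yield the same bound with the same constants' dependence.
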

\begin{proof}
We may and will assume that the right-hand side is finite. Using $\int_{t,x,y} \varrho(x-y)\leq N$, 
and \eqref{eq:as Psi}, the left-hand side of \eqref{eq:frac reg}
can then be estimated by a constant times
\begin{equs}
&\quad\E \int_{t,x,y}I_{|u(t,x)|\vee |u(t,y)|\geq 1}|u (t,x)- u(t,y)|\varrho(x-y)
\\
&\quad + \Big(\E\int_{t,x,y}I_{|u(t,x)|\vee |u(t,y)|< 1}|u (t,x)- u(t,y)|^{\frac{m+1}{2}}\varrho(x-y)\Big)^{\frac{2}{m+1}}
\\
&\le\E \int_{t,x,y}I_{|u(t,x)|\vee |u(t,y)|\geq 1}|\Psi(u (t,x))- \Psi(u(t,y))|\varrho(x-y)
\\
&\quad + \Big(\E\int_{t,x,y}I_{|u(t,x)|\vee |u(t,y)|< 1}|\Psi(u (t,x))- \Psi(u(t,y))|\varrho(x-y)\Big)^{\frac{2}{m+1}}.\label{eq:00}
\end{equs}
One can now drop the indicator functions, perform the change of variables $z=x-y$, and write
\begin{equs}
\E&\int_{t,x,z}|\Psi(u (t,x))- \Psi(u(t,x-z))|\varrho(z)
\\
&\leq \E\int_{t,x,z}\varrho(z)|z|\int_0^1|\nabla\Psi(u)(x-\lambda z)|\,d\lambda \leq \eps\E\|\nabla\Psi(u)\|_{L_1(Q_T)}.
\end{equs}
Substituting this back to \eqref{eq:00} yields the claim.
\end{proof}
As in \cite{Biswas}, we have that for an entropy solution, the initial condition is attained in the following sense:
\begin{lemma}             \label{lem:initial-condition}
Let $u$ be an entropy solution of \eqref{eq:main_equation}.  Then one has 
$$
\lim_{h \to 0} \frac{1}{h}\E \int_0^h \int_{x} |u(t,x)-\xi(x)|^2 \,dt=0.
$$
\end{lemma}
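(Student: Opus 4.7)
The plan is to combine an energy-type bound on $\tfrac{1}{h}\E\int_0^h\|u(t)\|_{L_2}^2\,dt$, obtained from the entropy inequality with $\eta\simeq r^2$, with the weak formulation of \eqref{eq:main_equation} (available via the Remark after Definition~\ref{def:solution}) tested against a mollification of $\xi$. Expanding $|u-\xi|^2=u^2-2u\xi+\xi^2$, these two ingredients will match so that the limsup as $h\to 0$ of the quantity in the statement is dominated by $\|\xi\|_{L_2}^2-2\|\xi\|_{L_2}^2+\|\xi\|_{L_2}^2=0$. Note $\xi\in L_2(\Omega\times\T^d)$ by Assumption~\ref{as:A}\eqref{as:ic} and the embedding $L_{m+1}(\T^d)\hookrightarrow L_2(\T^d)$.

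For the energy bound I would apply Definition~\ref{def:solution}\eqref{item:entropies} with a smooth convex $\eta_M$ satisfying $\eta_M''=2\chi_M$, where $\chi_M\nearrow 1$ is a compactly supported cut-off, and with the spatially constant test function $\phi(t,x)=\varphi_h(t)$, $\varphi_h(t)=(1-t/h)_+$ (suitably regularised in $C_c([0,T))$). The $q_\eta\Delta\phi$ term vanishes identically, the entropy dissipation is non-positive and dropped, and $\eta_M''(u)|\sigma(u)|_{l_2}^2\leq 4K^2(1+u^2)$. Letting $M\to\infty$ via monotone/dominated convergence yields
\begin{equ}
(1-Ch)\,\tfrac{1}{h}\E\int_0^h\!\int u^2\,dx\,dt\leq\E\|\xi\|_{L_2}^2+Ch
\end{equ}
for all small $h$, hence $\limsup_{h\to 0}\tfrac{1}{h}\E\int_0^h\|u(t)\|_{L_2}^2\,dt\leq\E\|\xi\|_{L_2}^2$. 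Only $u\in L_2(\Omega_T\times\T^d)$ is used, from Definition~\ref{def:solution}\eqref{item:in-Lm}, and no pointwise-in-$t$ bound on $\E\|u(t)\|_{L_2}^2$ is required.

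For the mixed term, the Remark yields the weak form of \eqref{eq:main_equation}; choosing $\phi(t,x)=\varphi_h(t)\psi(x)$ with deterministic $\psi\in C^\infty(\T^d)$ and taking expectations,
\begin{equ}
\tfrac{1}{h}\E\int_0^h\!\int u\,\psi\,dx\,dt=\E\int\xi\psi\,dx+\E\int_0^T\!\varphi_h(t)\int A(u)\Delta\psi\,dx\,dt,
\end{equ}
with remainder bounded by $\|\Delta\psi\|_\infty\E\int_0^h\|A(u(t))\|_{L_1}\,dt\to 0$, since $|A(u)|\leq C(1+|u|^m)\in L_1(\Omega_T\times\T^d)$ by Assumption~\ref{as:A}. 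Taking $\psi=\xi^\eps:=\xi*\rho_\eps^{\otimes d}$, which is $\cF_0$-measurable and smooth, I would extend this identity to $\xi^\eps$ by approximating it in $L_2(\Omega;C^2(\T^d))$ by $\cF_0$-measurable step functions $\sum_i\psi_i\mathbf{1}_{A_i}$ (with $\psi_i\in C^\infty(\T^d)$ deterministic and $A_i\in\cF_0$) and using linearity in $\psi$; $\cF_0$-measurability of the $\mathbf{1}_{A_i}$ preserves predictability of the stochastic integrand. The result is $\tfrac{1}{h}\E\int_0^h\langle u(t),\xi^\eps\rangle\,dt\to\E\langle\xi,\xi^\eps\rangle$ as $h\to 0$ for each fixed $\eps>0$.

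Expanding
\begin{equ}
\tfrac{1}{h}\E\int_0^h\!\int(u-\xi)^2\,dx\,dt=\tfrac{1}{h}\E\int_0^h\!\int u^2\,dx\,dt-2\,\tfrac{1}{h}\E\int_0^h\langle u(t),\xi\rangle\,dt+\E\|\xi\|_{L_2}^2
\end{equ}
and splitting $\langle u,\xi\rangle=\langle u,\xi^\eps\rangle+\langle u,\xi-\xi^\eps\rangle$, Cauchy--Schwarz gives
\begin{equ}
\tfrac{1}{h}\bigl|\E\int_0^h\!\langle u(t),\xi-\xi^\eps\rangle\,dt\bigr|\leq\bigl(\tfrac{1}{h}\E\int_0^h\!\|u(t)\|_{L_2}^2\,dt\bigr)^{1/2}\|\xi-\xi^\eps\|_{L_2(\Omega;L_2)},
\end{equ}
which is $o_{\eps\to 0}(1)$ uniformly in small $h$ by the energy step. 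Letting $h\to 0$ first (so the three terms converge to $\E\|\xi\|_{L_2}^2$, $-2\E\langle\xi,\xi^\eps\rangle$, $\E\|\xi\|_{L_2}^2$), and then $\eps\to 0$, gives the limsup bound $0$, and non-negativity upgrades this to the stated limit. The main obstacle is the rigorous extension of the weak formulation from deterministic $\psi$ to the random, $\cF_0$-measurable test function $\xi^\eps$, which is a priori only an a.s.\ identity for each fixed $\psi$; the step-function approximation handles this, and hinges on the $\cF_0$-measurability of $\xi^\eps$, i.e.\ its independence of the Wiener increments.
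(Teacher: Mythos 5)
Your argument is correct, but it follows a genuinely different route from the paper. The paper doubles the \emph{spatial} variable: it bounds $\tfrac1h\E\int_0^h\int_{x,y}|u(t,x)-\xi(y)|^2\varrho_\eps(x-y)$ directly by applying the entropy inequality once, with a truncated quadratic entropy $\eta_\delta(\cdot-a)$ and test function $\gamma(t)\varrho_\eps(\cdot-y)$, and then substituting the $\cF_0$-measurable value $a=\xi(y)$ into the resulting inequality; the delicate point there is justifying this substitution inside the stochastic integral, which is handled via continuity in $a$ of a smooth version (the same device as in the $(\star)$-property). You instead expand $|u-\xi|^2$ and treat the three terms separately: the quadratic term via the entropy inequality with $\eta\approx r^2$ and a spatially constant test function (giving $\limsup_h\tfrac1h\E\int_0^h\|u\|_{L_2}^2\le\E\|\xi\|_{L_2}^2$), and the cross term via the weak formulation tested against the mollification $\xi^\eps$, extended from deterministic to $\cF_0$-measurable test functions by simple-function approximation. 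Your route avoids the parametrized-stochastic-integral continuity argument entirely, at the price of the random-test-function extension, which you correctly identify and resolve (the key point being $\E[I_{A_i}\int\cdots d\beta^k]=0$ for $A_i\in\cF_0$); the paper's route is a single unified computation whose doubling machinery is reused verbatim in the uniqueness proof and does not rely on expanding the square. Two small details you should make explicit: (a) when passing from simple functions to $\xi^\eps$ in the remainder term, the product $\|\Delta\xi^\eps\|_{L_\infty}\cdot\|A(u)\|_{L_1(Q_T)}$ is integrable on $\Omega$ by H\"older with exponents $m+1$ and $(m+1)/m$, since $\|\Delta\xi^\eps\|_{L_\infty}\le N(\eps)\|\xi\|_{L_{m+1}}$ and $\|A(u)\|_{L_1(Q_T)}\le N(1+\|u\|^m_{L_m(Q_T)})$; and (b) in the energy step one must first check $\E\int_0^h\int u^2<\infty$ (immediate from Definition \ref{def:solution}\eqref{item:in-Lm}) before absorbing the $2K^2 h\cdot\tfrac1h\E\int_0^h\int u^2$ term into the left-hand side for $h$ small.
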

\begin{proof}
%It suffices to show that 
%$$
%\lim_{h \to 0} \frac{1}{h}\E \int_0^h \int_{\bT^d} |u(t,x)-\xi(x)|^2 \, dx=0.
%$$
Let $\varrho_{\varepsilon} \in C^\infty(\bT^d)$ be a mollification sequence, for example $\varrho_\eps=\rho_\eps^{\otimes d}$.
We have 
\begin{align}               \label{eq:spliting}
\nonumber
\frac{1}{h}\E \int_0^h \int_{x} & |u(t,x)-\xi(x)|^2  \,  dt 
\leq 2 \E  \int_{x,y} |\xi(y)-\xi(x)|^2 \varrho_\varepsilon(x-y) 
\\
&+ \frac{2}{h}\E \int_0^h \int_{x,y} |u(t,x)-\xi(y)|^2 \varrho_\varepsilon(x-y)\,  dt .
\end{align}
 We first estimate the second term on the right hand side. 
 Take a decreasing, non-negative $\gamma\in C^\infty([0,T])$, such that 
%$\gamma(0)=2$, $\partial_t\gamma(t)=-1/h$ for $t\in[0,h]$, and $\gamma(t)=0$ for $t\geq 2h$.
\begin{equ}
\gamma(0)=2,\quad  \gamma\leq 2I_{[0,2h]},\quad\partial_t\gamma\leq -\frac{1}{h}I_{[0,h]}.
\end{equ}
Take furthermore for each $\delta>0$, $\eta_\delta\in C^2(\R)$ defined by
\begin{equ}
\eta_\delta(0)=\eta_\delta^\prime(0)=0,\quad \eta_\delta''(r)=2I_{[0,\delta^{-1})}(|r|)+(-|r|+\delta^{-1}+2)I_{[\delta^{-1},\delta^{-1}+2)}(|r|).
\end{equ}
%$\eta_m(r)=r^2$ for $r \in [-m,m]$, and 
%$\eta''_m=2$ on $[-m,m]$, $\eta_m''=0$ on $\bR \setminus[-m-1,m+1]$ and linear on $[-m-1,-m]$ and on $[m,m+1]$. 
Let $y\in\T^d$ and $a \in \bR$. Then, using the entropy inequality \eqref{eq:entropy_inequality} with $\phi(t,x)= \gamma(t) \varrho_\varepsilon(x-y)$, $\eta (r)= \eta_\delta(r-a)$, and $q_\eta= \int_a^r \eta'(\zeta) \mathfrak{a}^2(\zeta) \, d\zeta$,   we obtain 
\begin{equs}            
-\int_{t,x} &\eta_\delta(u(t,x)-a) \D_t \gamma(t) \varrho_\varepsilon(x-y)
\leq 2\int_{x}\eta_\delta(\xi(x)-a)\varrho_\varepsilon(x-y) 
\\ 
& +N\int_{t,x} (|u(t,x)|^{m+1}+|a|^{m+1}) |\Delta_x \varrho_\eps(x-y)| \gamma(t) 
\\  
&+\frac{1}{2}\int_{t,x} \eta_\delta^{\prime\prime}(u(t,x)-a)|\sigma(x,u(t,x))|_{l_2}^2 \varrho_\varepsilon(x-y)\gamma(t)
\\   
&+ \int_0^T \int_{x} \eta_\delta^{\prime}(u(t,x)-a)\sigma^k(x,u(t,x))\varrho_\varepsilon(x-y)\gamma(t) \, d\beta^k(t),
\end{equs}
where for the second term on the right hand side we have used \eqref{eq:as fra}. 
Notice that all the terms are continuous in $a \in \bR$.
Upon substituting $a=\xi(y)$ taking expectations, integrating over $y \in \bT^d$, and using the bounds on $\gamma$, one gets
\begin{equs}              
&\frac{1}{h}\int_0^h \E \int_{x,y} \eta_\delta(u(t,x)-\xi(y))  \varrho_\varepsilon(x-y)\,dt
\\
& 
\leq 2\E \int_{x,y}\eta_\delta(\xi(x)-\xi(y))\varrho_\varepsilon(x-y) 
\\ 
& +\frac{N}{\varepsilon^2}\E \int_0^{2h}\int_{x} (|u(t,x)|^{m+1}+|\xi(x)|^{m+1}) \,dt
\\
&+\E \int_0^{2h} \int_{x} \eta_\delta^{\prime\prime}(u(t,x)-\xi(x))|\sigma(x,u(t,x))|_{l_2}^2 \varrho_\varepsilon(x-y)\, dt.
\end{equs}
In the $\delta\to0$ limit this yields
\begin{equs}          
&\frac{1}{h}\E \int_0^h\int_{x,y} |u(t,x)-\xi(y)|^2  \varrho_\varepsilon(x-y)\,dt 
\\
&\leq 2\E \int_{x,y}|\xi(x)-\xi(y)|^2\varrho_\varepsilon(x-y) \, dx
\\
& +\frac{N}{\varepsilon^2}\E \int_0^{2h}\int_{x} (|u(t,x)|^{m+1}+|\xi(x)|^{m+1})\, dt
\\
&+2\E \int_0^{2h} \int_{x,y}|\sigma(x,u(t,x))|_{l_2}^2 \varrho_\varepsilon(x-y)\, dt,
\end{equs}
which implies that 
\begin{equs}
\limsup_{h \to 0}\frac{1}{h}\E &\int_0^h\int_{x,y} |u(t,x)-\xi(y)|^2  \varrho_\varepsilon(x-y)\,dt 
\\
&\leq 2\E \int_{x,y}|\xi(x)-\xi(y)|^2\varrho_\varepsilon(x-y).
\end{equs}
Consequently, by \eqref{eq:spliting} we get
\begin{align*}
&\limsup_{h \to 0}\frac{1}{h}\E \int_0^h\int_{x} |u(t,x)-\xi(x)|^2  \,dt 
\leq 3\E \int_{x,y}|\xi(x)-\xi(y)|^2\varrho_\varepsilon(x-y),
\end{align*}
from which the claim follows, since right hand side goes to $0$ as $\varepsilon \to 0$ due to continuity of translations in $L_2(\bT^d)$.
\end{proof}

\subsection{The $(\star)$-property}
We now introduce a somewhat loaded, but important definition.
First take $ g \in C^\infty(\bR)$ with $g' \in C^\infty_c(\bR)$, $\varrho \in C^\infty(\bT^d \times \bT^d)$, $\varphi \in C^\infty_c((0,T))$,
$\tsigma\in C(\T^d\times\R)$ with linear growth,
and $\tu\in L_{m+1}(\Omega_T; L_{m+1}( \T^d))$.
We then introduce the notations, for $\theta>0$, $a\in\R$,
\begin{equ}
\phi_\theta(t,x,s,y):=\varrho(x,y) \rho_\theta(t-s) \varphi\left(\frac{t+s}{2}\right),
\end{equ}
\begin{equ}
F_\theta(t,x,a):= \int_0^T \int_{y} \tsigma^k(y, \tu(s,y)) g(\tu(s,y)-a)\phi_\theta(t,x,s,y) \,d\beta^k(s).
\end{equ}
\begin{remark}
There exists a version of the function $F_\theta$ which is smooth in $(t,x,a)$ (see, e.g., \cite[Exercise 3.15, page 78]{Kun}). We will always use this version. 
\end{remark}
Set $\mu=\mu(m)=\frac{3m+5}{4(m+1)}$, which is chosen so that one has $\frac{m+3}{2(m+1)}<\mu<1$.
\begin{definition}\label{def:star}
A function $u\in L_{m+1}(\Omega\times Q_T)$ is said to have the $(\star)$-property if for all $g,\varrho,\varphi,\tsigma,\tu$ as above,
and for all sufficiently small $\theta>0$, we have that $F_\theta(\cdot, \cdot, u) \in L_1(\Omega_T \times \bT^d)$ and

\begin{align}
\nonumber
&\E \int_{t,x} F_\theta(t,x, u(t,x)) \leq  N\theta^{1-\mu}
\\  \label{eq:strong-entropy}
& -  \E \int_{s,t,x,y} \sigma^k(x, u(s,x)) \tsigma^k(y, \tilde{u}(s,y))g'(u(s,x)-\tilde{u}(s,y))\phi_{\theta}(t,x,s,y),
\end{align}
with some constant $N$ independent of $\theta$.
\end{definition}

Note that this property is only related to \eqref{eq:main_equation} through the stochastic part $\sigma$.
If the choice of $\sigma$ needs to be emphasized, we talk about the $(\star)$-property with coefficient $\sigma$.
In the terminology of \cite{FENG} and  \cite{Biswas}, an entropy solution with the $(\star)$-property can be referred to as a strong entropy solution.

Let us first derive some basic estimates for $F_\theta$, and hence we fix  $g, \varrho, \varphi,\tsigma, \tu$ as above.
%$ g \in C^\infty(\bR)$ with $g'\in C^\infty_c(\bR^d)$, $\varrho \in C^\infty(\bT^d \times \bT^d)$, $\varphi \in C^\infty_c((0,T))$
%and $\tu\in L_{m+1}(\Omega\times Q_T)$.
First note that since $\varphi$
is compactly supported in $(0,T)$, and $\rho_\theta(t-\cdot)$ is supported in $[t-\theta, t]$,
we have for $\theta $ sufficiently small,
\begin{equ}\label{eq:F rewrite}
F_\theta (t,x,a)= I_{t> \theta} \int_{t-\theta}^t \int_{y} \sigma^k(y, \tu(s,y)) g(\tu(s,y)-a)\phi_\theta(t,x,s,y) \, d\beta^k(s).
\end{equ}
%We now prove an estimate on $\D_a F_\theta$.
\begin{lemma}\label{lem:F}
For any  $\lambda\in ( \frac{m+3}{2(m+1)}, 1)$, $k\in\mathbb{N}$ we have
\begin{equation}\label{eq:F estimate}
\E \| \D_ a F_\theta \|_{L_\infty([0,T]; W^{k}_{m+1}(\bT^d \times \bR))}^{m+1} \leq N \theta^{-\lambda (m+1)}(1+
%\sup_{s \leq T}
\E \| \tu\|_{L_{m+1}(Q_T)}^{m+1}),
\end{equation} 
where $N$ depends only on $k,p,d, T ,\lambda$,  and the functions $g, \varrho, \varphi,\tsigma, \tu$, but not on $\theta$.
\end{lemma}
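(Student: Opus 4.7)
The strategy is to write $G_\beta(t,x,a):=D^\beta_{x,a}\partial_a F_\theta(t,x,a)$ (for any multi-index $\beta=(\beta_x,\beta_a)$ with $|\beta|\leq k$) as a stochastic integral, bound it pointwise in $t$ via BDG, and upgrade the bound to $L_\infty$-in-$t$ by a fractional Sobolev embedding in the time variable.

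By smoothness of $g,\varrho,\varphi$ and linear growth of $\tsigma$, differentiating under the stochastic integral yields (with $j:=|\beta_a|+1$ and $\psi(s,t):=\rho_\theta(t-s)\varphi((t+s)/2)$)
\[
G_\beta(t,x,a)=(-1)^j\int_0^T\psi(s,t)H^\beta_k(s,x,a)\,d\beta^k(s),
\]
where $H^\beta_k(s,x,a):=\int_y \tsigma^k(y,\tu(s,y))g^{(j)}(\tu(s,y)-a)D^{\beta_x}_x\varrho(x,y)\,dy$. Since $g'\in C^\infty_c(\R)$, the compact $a$-support of $g^{(j)}$ combined with a change of variables in $a$ yields the crucial kernel bound $\int_{\bT^d\times\R}|H^\beta(s,x,a)|^{m+1}_{\ell_2}\,dx\,da\lesssim 1+\|\tu(s,\cdot)\|^{m+1}_{L_{m+1}(\bT^d)}$. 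For fixed $(t,x,a)$, the $\ell_2$-valued BDG inequality combined with $|\psi|\leq C\theta^{-1}\mathbf{1}_{[t-\theta,t]}$ and Hölder in $s$ yields $\E|G_\beta(t,x,a)|^{m+1}\lesssim \theta^{-(m+3)/2}\int_{t-\theta}^t\E|H^\beta(s,x,a)|^{m+1}_{\ell_2}\,ds$. For increments I interpolate $|\psi(s,t)-\psi(s,t')|\leq C|t-t'|^\alpha\theta^{-1-\alpha}$ for $\alpha\in[0,1]$ (between $|\psi|\leq C\theta^{-1}$ and $|\partial_t\psi|\leq C\theta^{-2}$) and re-run the BDG estimate, obtaining $\E|G_\beta(t,x,a)-G_\beta(t',x,a)|^{m+1}\lesssim |t-t'|^{\alpha(m+1)}\theta^{-(m+3)/2-\alpha(m+1)}\int\E|H^\beta|^{m+1}_{\ell_2}\,ds$.

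To pass to $L_\infty$-in-$t$, I apply the Sobolev--Slobodetski\u{\i} embedding $W^{r,m+1}([0,T])\hookrightarrow L_\infty([0,T])$ for $r>1/(m+1)$: $\sup_t|f|^{m+1}\lesssim \|f\|^{m+1}_{L_{m+1}([0,T])}+\iint_{[0,T]^2}|f(t)-f(t')|^{m+1}|t-t'|^{-1-r(m+1)}\,dt\,dt'$. Applied to $f=G_\beta(\cdot,x,a)$, integrating over $(x,a)$ using $\sup_t\int_{x,a}\leq\int_{x,a}\sup_t$, Fubini, and selecting $\alpha>r$ so that the Gagliardo integral converges, I sum over $|\beta|\leq k$ to obtain the $W^k_{m+1}$ bound.

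The hard part is reaching the sharp exponent. The naive execution above gives only $\lambda>(m+5)/(2(m+1))$, since the Sobolev constraint $r(m+1)>1$ forces the increment exponent $\alpha(m+1)>1$, so the total $\theta$-exponent exceeds $(m+5)/2$. Saving the remaining $1/(m+1)$ to reach the stated $\lambda>(m+3)/(2(m+1))$ requires a finer argument exploiting the Gaussian structure of $G_\beta$: conditionally on $\tu$, $t\mapsto G_\beta(t,x,a)$ is a centered Gaussian process, so Borell--TIS-type bounds allow $\sup_t$ to cost only a logarithmic rather than polynomial factor in $\theta$, which closes the gap.
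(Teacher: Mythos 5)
Your strategy (BDG on the space--$a$ derivatives of $\D_a F_\theta$, a fractional-in-time regularity estimate, then Sobolev embedding $W^{r}_{m+1}([0,T])\hookrightarrow L_\infty([0,T])$ for $r>1/(m+1)$) is exactly the paper's: the paper packages the same increment estimates as an interpolation between $L_{m+1}([0,T];W^k_{m+1})$ and $W^1_{m+1}([0,T];W^k_{m+1})$. The genuine problem is your final accounting and the patch you propose for it. The claimed shortfall of $1/(m+1)$ in the exponent is an arithmetic error: you never credit the factor $\theta$ gained when the localized $s$-integral is integrated over $t$. Since your pointwise bound reads $\E|G_\beta(t,x,a)|^{m+1}\lesssim\theta^{-(m+3)/2}\int_{t-\theta}^t\E|H^\beta(s,x,a)|^{m+1}_{\ell_2}\,ds$, integrating over $t\in[0,T]$ and using $\int_\theta^T\int_{t-\theta}^t|f(s)|\,ds\,dt\leq\theta\int_0^T|f(s)|\,ds$ (this is \eqref{eq:whole-theta} in the paper) gives $\E\|G_\beta\|_{L_{m+1}([0,T]\times\T^d\times\R)}^{m+1}\lesssim\theta^{-(m+1)/2}(1+\E\|\tu\|_{L_{m+1}(Q_T)}^{m+1})$, not $\theta^{-(m+3)/2}(\cdots)$. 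Likewise, in the Gagliardo seminorm you should split at $|t-t'|=\theta$: on $\{|t-t'|\leq\theta\}$ the kernel $|t-t'|^{\alpha(m+1)-1-r(m+1)}$ integrates to $\theta^{(\alpha-r)(m+1)}$, which cancels your $\theta^{-\alpha(m+1)}$ and, together with the Fubini gain above, yields $\theta^{-(m+1)/2-r(m+1)}$; on $\{|t-t'|>\theta\}$ the triangle inequality and the non-increment bound give the same. Hence $\lambda=1/2+r$ with $r>1/(m+1)$ arbitrary, i.e.\ precisely the stated range $\lambda\in(\tfrac{m+3}{2(m+1)},1)$. No further idea is needed.

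The Borell--TIS fallback you invoke to ``close the gap'' does not work and should be deleted. Conditionally on $\tu$, the process $t\mapsto G_\beta(t,x,a)$ is \emph{not} a centered Gaussian process in general: the integrand $H^\beta(s,\cdot)$ is $\cF_s$-measurable and $\tu$ is an arbitrary predictable process, typically adapted to the same filtration as $(\beta^k)$; conditioning on the whole path of $\tu$ destroys the independent-increment structure of the driving noise unless $\tu$ is independent of it (which is false in the intended application, where $\tu$ is the second solution driven by the same Wiener processes). Even granting Gaussianity, a Borell--TIS bound controls deviations of $\sup_t G_\beta$ from its expectation, which would still have to be estimated by an entropy argument, so the ``only logarithmic cost'' claim is unsubstantiated. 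Fortunately, as explained above, the elementary route already achieves the sharp exponent.
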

\begin{proof}
To ease the notation we suppress the $y\in\T^d$ argument in $\tsigma$ and the $s,y\in Q_T$ arguments in $\tu$.
For any  $q \in \mathbb{N}^d$, $l \in \mathbb{N}$, $j \in \{0,1\}$, we have 
by the Burkholder-Gundy-Davis inequality
\begin{equs}
 \E  |&\D_t^j\D^{l+1}_a \D^q_xF_\theta (t,x,a ) |^{m+1} \\
 \le & \E  I_{t>\theta}\left[\int_{t-\theta}^t \sum_k\left(\int_{y} \tsigma^k(\tu) \D_a^{l+1} g(\tu-a) \D^q_x \D^j_t \phi_\theta(t,x,s,y) \right)^2  \, ds \right]^{(m+1)/2}
\\
 \le &  \E I_{t>\theta}\left[\int_{t-\theta}^t \| \tsigma(\tu)\|_{\ell_2(L_2(\bT^d))}^2 
\| \D_a^{l+1}g(\tu-a)\|_{L_2(\bT^d)}^2
 \theta^{-2(1+j)} \, ds \right]^{(m+1)/2}
 \\
 \le & \theta^{\frac{m-1}{2}}\theta^{-(m+1)(1+j)}  \E I_{t>\theta}\left[\int_{t-\theta}^t \| \tsigma(\tu)\|_{\ell_2(L_2(\bT^d))}^{m+1} 
\| \D_a^{l+1}g(\tu-a)\|_{L_2(\bT^d)}^{m+1}
 \, ds \right].
 \label{eq:choose-j}
\end{equs}
Choosing $j=0$,  summing  over all $|q|+l \leq k$,  integrating over $[0,T] \times \T^d \times \bR$,  and using the fact that $g' \in C^\infty_c(\bR)$ and the trivial estimate \eqref{eq:whole-theta}, we obtain
$$
\E \|\D_a F_\theta \|_{L_{m+1}([0,T]; W^{k}_{m+1}(\bT^d \times \bR))}^{m+1}\le \theta^{\frac{m-1}{2}}\theta^{-m} \E \int_t \| \tsigma(\tu)\|_{\ell_2(L_2(Q_T))}^{m+1},
$$
which by the linear growth of $\tsigma$ gives 
\begin{equation}               \label{eq:LpWk}
\E \|\D_a F_\theta \|_{L_{m+1}([0,T]; W^{k}_{m+1}(\bT^d \times \bR))}^{m+1} \le \theta^{-\frac{m+1}{2}} (1+ \E \| \tu\|_{L_{m+1}(Q_T)}^{m+1}).
\end{equation} 
Similarly, choosing $j=1$ in \eqref{eq:choose-j}, summing  over all $|q|+l \leq k$, and integrating over $[0,T] \times \bT^d \times \bR$ gives
\begin{equation}                   \label{eq:W1Wk}
\E \|\D_a F_\theta \|_{W^1_{m+1}([0,T] ; W^{k}_{m+1}(\bT^d \times \bR))}^{m+1} \le  \theta^{-3\frac{(m+1)}{2}} (1+ \E \| \tu\|_{L_{m+1}(Q_T)}^{m+1}).
\end{equation}
By interpolating between \eqref{eq:LpWk} and \eqref{eq:W1Wk} (see also \cite[Section 1.18.4]{Triebel} for the treatment of the extra $L_{m+1}(\Omega)$ space)  we have for $\delta \in [0,1]$
\begin{equation}
\E \|\D_a F_\theta \|_{W^\delta_{m+1}([0,T] ; W^{k}_{m+1}(\bT^d \times \bR))}^{m+1} \le  \theta^{-(m+1)(1+2\delta)/2} (1+ \E \| \tu\|_{L_{m+1}(Q_T)}^{m+1}).
\end{equation}
For arbitrary $\delta \in (1/(m+1),1/2)$, we set  $\lambda=(1+2\delta)/2$, 
and the claim follows by Sobolev embedding.
\end{proof}
\begin{remark}
Since $k$ was arbitrary, by Sobolev embedding one can estimate the $L_\infty(Q_T\times\R)$ norm of $\partial_\alpha \D_aF_\theta$ by the right-hand side of \eqref{eq:F estimate}, for any multi-index $\alpha$ in the variables $x,a$. We will do so in the sequel, in fact always with the choice $\lambda=\mu$.
\end{remark}

\begin{corollary}\label{cor} 
(i) Let $u_n$ be a sequence bounded in $L_{m+1}(\Omega_T\times \bT^d)$, satisfying the $(\star)$-property with coefficient $\sigma_n$, uniformly in $n$. 
Suppose that $u_n$ converges for almost all $\omega,t,x$ to a function $u$ and $\sigma_n$ converges in the supremum norm to $\sigma$. Then $u$ has the $(\star)$-property with coefficient $\sigma$.

(ii) Let $u\in L_2(\Omega\times Q_T)$. Then one has for all $\theta>0$
\begin{equ}\label{eq:0lambda limit}
\E \int_{t,x} F_\theta(t,x, u(t,x)) = \lim_{\lambda \to 0}  \E \int_{t,x,a} F_\theta(t,x, a) \rho_\lambda(u(t,x)-a) \, .
\end{equ}
\end{corollary}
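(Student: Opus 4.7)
For (i), fix admissible test data $g, \varrho, \varphi, \tsigma, \tu$ and $\theta > 0$ sufficiently small; the random function $F_\theta$ depends only on $(\tsigma, \tu)$ and so is common to all $n$. The strategy is to pass to the limit $n \to \infty$ in the $(\star)$-inequality satisfied by $(u_n, \sigma_n)$, using that $N\theta^{1-\mu}$ is uniform in $n$ by hypothesis. For the left-hand side $\E\int_{t,x} F_\theta(t,x,u_n)$, the smooth version of $F_\theta$ (Remark following its definition) gives a.s.\ pointwise convergence $F_\theta(t,x,u_n) \to F_\theta(t,x,u)$. An $a$-uniform integrable bound follows from a Burkholder-Gundy-Davis estimate applied directly to $F_\theta$ (in the spirit of the proof of Lemma \ref{lem:F}), exploiting the boundedness of $g$ (which follows from $g' \in C^\infty_c$) and the linear growth of $\tsigma$; this yields $\E\|F_\theta\|_{L_\infty(Q_T \times \bR)}^{m+1} < \infty$ for fixed $\theta$, and dominated convergence passes the limit through.

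For the right-hand side, the integrand is dominated by $|\sigma_n|_{l_2}|\tsigma|_{l_2}\|g'\|_\infty|\phi_\theta|$; the uniform linear growth of $\sigma_n$ together with the uniform $L_{m+1}$-bound on $u_n$ (and fixed integrability of $\tu$) produces an $n$-uniform integrable majorant. Pointwise a.s.\ convergence of the integrand follows from uniform convergence $\sigma_n \to \sigma$, the equicontinuity built into Assumption \ref{as:noise}, a.s.\ convergence $u_n \to u$, and continuity of $g'$. A second application of dominated convergence then yields convergence, and the inequality persists in the limit, so $u$ has the $(\star)$-property with coefficient $\sigma$.

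For (ii), using $\int_a \rho_\lambda(u(t,x)-a)\,da = 1$, one can write
\begin{equs}
&\E\int_{t,x,a} F_\theta(t,x,a)\rho_\lambda(u(t,x)-a) - \E\int_{t,x}F_\theta(t,x,u(t,x))
\\
&\qquad = \E\int_{t,x,a}\bigl[F_\theta(t,x,a) - F_\theta(t,x,u(t,x))\bigr]\rho_\lambda(u(t,x)-a).
\end{equs}
Since $\rho_\lambda(u(t,x)-\cdot)$ is supported in $\{|a-u(t,x)| \leq \lambda\}$, smoothness of $F_\theta$ in $a$ yields the pointwise bound $|F_\theta(t,x,a) - F_\theta(t,x,u(t,x))| \leq \lambda\|\D_a F_\theta(t,x,\cdot)\|_{L_\infty(\bR)}$ on that support. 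The remark after Lemma \ref{lem:F} provides $\E\|\D_a F_\theta\|_{L_\infty(Q_T\times\bR)}^{m+1} < \infty$, so the right-hand side above is bounded in absolute value by $\lambda$ times a finite $\theta$-dependent constant; sending $\lambda \to 0$ closes the argument.

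The main obstacle I anticipate is in part (i): Lemma \ref{lem:F} controls $\D_a F_\theta$ in $L_\infty$, but for the substitution $a = u_n(\omega,t,x)$ one needs an $a$-uniform, $\omega$-integrable bound on $F_\theta$ itself. Since $F_\theta$ need not vanish as $|a|\to\infty$, this bound must come from a direct BDG computation leveraging the boundedness of $g$, rather than from a Sobolev embedding on the unbounded $a$-domain.
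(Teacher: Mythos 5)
Part (ii) is correct and coincides with the paper's argument: on the support of $\rho_\lambda(u(t,x)-\cdot)$ one has $|F_\theta(t,x,a)-F_\theta(t,x,u(t,x))|\le\lambda\|\D_aF_\theta\|_{L_\infty(Q_T\times\R)}$, and Lemma \ref{lem:F} makes this quantity integrable in $\omega$.

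In part (i) your overall strategy (pass to the limit in \eqref{eq:strong-entropy} using a.e.\ convergence plus an integrability argument, with the right-hand side handled by the uniform linear growth of $\sigma_n$) is the paper's, but the pivotal claim $\E\|F_\theta\|_{L_\infty(Q_T\times\bR)}^{m+1}<\infty$ is not established by what you describe. A ``direct BDG computation leveraging the boundedness of $g$'' yields only $\sup_{t,x,a}\E|F_\theta(t,x,a)|^{m+1}<\infty$, i.e.\ a moment bound pointwise in $a$; to put the supremum over the \emph{unbounded} $a$-line inside the expectation you would need chaining, a weighted Sobolev embedding, or decay of $F_\theta$ in $a$ --- and $F_\theta$ does not decay, precisely because $g$ only has limits, not zero limits, at $\pm\infty$. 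You correctly identify this as the obstacle, but the proposed resolution does not resolve it, so as written the dominating function for your first application of dominated convergence is not known to be integrable. The paper's fix is to avoid any supremum of $F_\theta$ itself: from $F_\theta(t,x,a)=F_\theta(t,x,0)+\int_0^a\D_aF_\theta(t,x,b)\,db$ one gets
\begin{equation*}
|F_\theta(t,x,u_n(t,x))|\le \|\D_aF_\theta\|_{L_\infty(Q_T\times\R)}\,|u_n(t,x)|+|F_\theta(t,x,0)|,
\end{equation*}
which involves only the supremum of the $a$-derivative (controlled by Lemma \ref{lem:F} exactly because $g'$, unlike $g$, is compactly supported) and the single value $a=0$. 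Since $\|\D_aF_\theta\|_{L_\infty}\in L_{m+1}(\Omega)$ and $(u_n)$ is bounded in $L_{m+1}(\Omega_T\times\T^d)$, the right-hand side is uniformly integrable in $(\omega,t,x)$ uniformly in $n$, and one concludes by Vitali's convergence theorem rather than dominated convergence. With this substitution your argument goes through; a cosmetic point is that your ``$n$-uniform integrable majorant'' on the right-hand side of \eqref{eq:strong-entropy} also depends on $n$ through $|u_n|$, so there too the correct tool is uniform integrability/Vitali rather than a fixed dominating function.
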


\begin{proof}
(i) We have that $\lim_{n \to \infty} F_\theta(t,x, u_n(t,x))=F_\theta(t,x, u(t,x))$ for almost all $(\omega,t,x)$. Moreover, 
\begin{equ}\label{eq:simple}
|F_\theta(t,x, u_n(t,x))|\leq \|\D_aF_\theta\|_{L_\infty(Q_T\times\R)} |u_n(t,x)|+|F(t,x,0)|. 
\end{equ}
By Lemma \ref{lem:F}, and the fact that  $\E\int_{t,x}|F_\theta(t,x,0)|<\infty$,
%\begin{equ}
%\sup_{\eps\in[0,1]}\E\|u_\varepsilon\|_{L_2(Q_T)}^2< \infty,\quad\E\int_{t,x}|F(t,x,0)|<\infty,
%\end{equ}
% for any $q\geq 2$, 
we see that the right hand side above is uniformly integrable in $(\omega,t,x)$.
Hence, one can take limits on
%,for any fixed $\theta$,
the left-hand side of \eqref{eq:strong-entropy} to get
$$
\lim_{n\to\infty}\E \int_{t,x} F_\theta(t,x, u_n(t,x))=\E \int_{t,x} F_\theta(t,x, u(t,x)).
$$
By similar (in fact, easier) arguments one can see the convergence of the second term on the right-hand side of \eqref{eq:strong-entropy}, and since   the constant $N$ was assumed to be independent of $n \in \bN$, we get the claim.

(ii) Writing
$$
\big| F_\theta(t,x,u(t,x))-\int_aF_\theta(t,x, a) \rho_\lambda(u(t,x)-a) \,  \big|
\leq \lambda\| \D_a F_\theta\|_{L_\infty(Q_T\times\R)},
$$
the claim simply follows from Lemma \ref{lem:F}.
\end{proof}

Finally we formulate a simple technical statement used in the proof of Theorem \ref{thm:uniqueness} below.
%For the next proposition we will use the following function 
%$$
%g(t,x,s,y):=  \varrho\left(x-y\right) \varphi \left(\frac{t+s}{2}\right),
%$$
%where $\varrho \in L_\infty( \bT^d)$ and $\varphi \in C^\infty_c(0,T)$.
% with $\varrho, \varphi\geq0$
\begin{proposition}\label{prop:theta conv}
Let $v\in L_{1}(\Omega\times Q_T)$, and let
$\bar{\rho}_\theta(\cdot)$ be either $\rho_\theta(\cdot)$ or $\rho_\theta(-\cdot)$.
Furthermore, let $g\in C_c^\infty\big(((0,T)\times\T^d)^2\big)$,
$f\in L_1(\Omega\times Q_T)$,
and let $h:\Omega\times Q_T\times \R\rightarrow\R$ be a function bounded by $C$,
such that $|h(t,x,a)-h(t,x,b)|\leq C |a-b|$, for some constant $C$.
Then
\begin{equs}
\E\int_{t,s,x,y}&h(t,x,v(s,y))f(t,x)g(t,x,s,y)\bar{\rho}_\theta(t-s)
\\
&\underset{\theta\rightarrow0}{\rightarrow}
\E\int_{t,x,y}h(t,x,v(t,y))f(t,x)g(t,x,t,y).
\end{equs}
\end{proposition}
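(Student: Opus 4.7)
\emph{Setup and the easy piece.} My approach is to subtract the desired limit from the prelimit, split the difference into two pieces isolating the discrepancies in the $s$-arguments of $v$ and of $g$, and handle each via a different mechanism: smoothness of $g$ for one, $L_1$-continuity of time-translations of $v$ for the other. Because $g$ has compact support in $(0,T)^2\times(\T^d)^2$, for all sufficiently small $\theta$ the identity $\int_s\bar\rho_\theta(t-s)\,ds=1$ holds on the $t$-support of $g$, so the target side may be rewritten as $\E\int_{t,s,x,y}h(t,x,v(t,y))f(t,x)g(t,x,t,y)\bar\rho_\theta(t-s)$. The quantity of interest then becomes
\[
D_\theta=\E\int_{t,s,x,y}\big[h(t,x,v(s,y))g(t,x,s,y)-h(t,x,v(t,y))g(t,x,t,y)\big]f(t,x)\bar\rho_\theta(t-s).
\]
Inserting the hybrid term $h(t,x,v(t,y))g(t,x,s,y)$ splits $D_\theta=A_\theta+B_\theta$, where $A_\theta$ records the change in the $v$-argument of $h$ and $B_\theta$ the change in the time-argument of $g$. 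For $B_\theta$, the smoothness of $g$ gives $|g(t,x,s,y)-g(t,x,t,y)|\le N|t-s|\le N\theta$ on the support of $\bar\rho_\theta(t-s)$, and combined with $|h|\le C$ this yields $|B_\theta|\le CN\theta\,\E\|f\|_{L_1(Q_T)}\to 0$.

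\emph{The hard piece.} For $A_\theta$, combining boundedness and Lipschitz continuity of $h$ in its third argument gives
\[
|h(t,x,v(s,y))-h(t,x,v(t,y))|\le C\min\!\big(2,\,|v(s,y)-v(t,y)|\big).
\]
Bounding $|g|\le M$ and integrating out the $x$-factor against $|f|$ produces
\[
|A_\theta|\le CM\,\E\int_{t,s,y}\min\!\big(2,\,|v(s,y)-v(t,y)|\big)\,\|f(\omega,t,\cdot)\|_{L_1(\T^d)}\,\bar\rho_\theta(t-s).
\]
For $\mathbb{P}$-a.e.\ $\omega$, view $v(\omega,\cdot,\cdot)$ as an $L_1(\T^d)$-valued Bochner-integrable function on $[0,T]$; the Bochner--Lebesgue differentiation theorem (using $\bar\rho_\theta\le 2/\theta$ and Lebesgue points) gives, for a.e.\ $(\omega,t)$,
\[
\int_{s,y}\min\!\big(2,\,|v(s,y)-v(t,y)|\big)\bar\rho_\theta(t-s)\,ds\,dy\le\int_s\bar\rho_\theta(t-s)\|v(\omega,s,\cdot)-v(\omega,t,\cdot)\|_{L_1(\T^d)}\,ds\to 0
\]
as $\theta\to 0$. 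Since this integral is uniformly bounded by $2|\T^d|$, the integrand in $(\omega,t)$ is dominated by the $L_1(\Omega\times[0,T])$-majorant $2|\T^d|\,\|f(\omega,t,\cdot)\|_{L_1(\T^d)}$, and dominated convergence closes $|A_\theta|\to 0$.

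\emph{Main obstacle.} The main difficulty is the low regularity of $v$, which lies only in $L_1(\Omega\times Q_T)$: pointwise estimates on $|v(s,y)-v(t,y)|$ are unavailable. Without the truncation coming from boundedness of $h$, the naive bound $|v(s,y)-v(t,y)|\le|v(s,y)|+|v(t,y)|$ multiplied by $\|f(\omega,t,\cdot)\|_{L_1(\T^d)}$ need not be integrable in $(\omega,t)$, since one only has $v,f\in L_1$ with no joint structure. It is the combination of the truncation $\min(2,\cdot)$ delivered by $\|h\|_\infty\le C$ with the Bochner--Lebesgue differentiation theorem that produces the $L_1$-dominator needed to justify dominated convergence.
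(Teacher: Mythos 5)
Your proof is correct and follows the same decomposition as the paper's: first replace $g(t,x,s,y)$ by $g(t,x,t,y)$ at a cost $O(\theta)$, then control the remaining term by the quantity $e_\theta(t)F(t)$ with $e_\theta(t)=\int_{s,y}(|v(t,y)-v(s,y)|\wedge 1)\bar\rho_\theta(t-s)$ and $F(t)=\int_x|f(t,x)|$. The only divergence is in the final limiting step: you establish $e_\theta(\omega,t)\to0$ for a.e.\ $(\omega,t)$ via the Lebesgue differentiation theorem for the Bochner-integrable map $s\mapsto v(\omega,s,\cdot)\in L_1(\T^d)$ (using $\bar\rho_\theta\leq 2/\theta$ and the one-sided support of $\rho$) and then apply dominated convergence with majorant $2\|f(\omega,t,\cdot)\|_{L_1(\T^d)}$, whereas the paper shows $e_\theta\to0$ in $L_1(\Omega\times\R)$ by continuity of translations, hence in measure, and concludes by dominated convergence in measure against the same majorant. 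Both mechanisms are valid and of comparable difficulty; your version localizes the argument to a.e.\ $(\omega,t)$, while the paper's avoids invoking vector-valued differentiation theory.
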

\begin{proof}
First notice that since in the variables $s,t$ the function $g$ is supported compactly in $(0,T)^2$ and $\bar{\rho}_\theta$ is supported in $[-\theta,\theta]$, the integration over $[0,T]^2$ can be replaced by integration over $\bR^2$ and $f, v$ can be set equal to zero on the complement of $[0,T]$. 
First, one has the bound
\begin{equs}
\big|\E&\int_{\bR^2}\int_{x,y}h(t,x,v(s,y))f(t,x)\big(g(t,x,s,y)-g(t,x,t,y)\big)\bar{\rho}_\theta(t-s)\,ds\,dt\big|\leq N\theta,
%\\
%&\leq N\theta
%\|\varphi'\|_{L_\infty}\| \varrho\|_{L_\infty}
%\big(1+\E\|u\|_{L_{m+1}( Q_T)}^{m+1}+\E \|\tu\|_{L_{m+1}( Q_T)}^{m+1}\big),
\end{equs}
where here and below $N$ is some constant depending on the data $C,T$, and the norms of $g,f$.
One also has
\begin{align}
\nonumber
\big|\E&\int_{\bR^2}\int_{x,y}\big(h(t,x,v(s,y))-h(t,x,v(t,y))\big)f(t,x)g(t,x,t,y)\bar \rho_\theta(t-s)\,ds\,dt\big|
\\    \nonumber
&\leq 
N
%\|\varrho\|_{L_\infty}
\E\int_{\bR^2}\int_{y}\big(|v(t,y)-v(s,y)|\wedge 1 \big)f(t,x)\bar \rho_\theta(t-s)\,ds\,dt
\\   
&\leq  N
\E\int_\R e_\theta(t)F(t)\,dt,
\label{eq:theta-convergence}
\end{align}
where the (random) processes $e_\theta$ and $F$ are defined by
\begin{equ}
e_\theta(t)= \int_{\bR}\int_{y}(|v(t,y)-v(s,y)|\wedge 1)\bar \rho_\theta(t-s)\,ds,
\quad
F(t)=\int_x|f(t,x)|.
\end{equ}
Since $t\mapsto v(t,\cdot)$ belongs to $L_{1}(\R,L_{1}(\Omega \times \bT^d))$,
by the continuity of translations we get that
 $e_\theta\to 0$ 
as $\theta\rightarrow0$ in $L_1(\Omega\times \R)$, and hence also
in measure on $\Omega \times \R$. 
Since $e_\theta$ is also uniformly bounded by $1$ and $F\in L_1(\Omega\times \R)$, it follows that 
the right-hand side of \eqref{eq:theta-convergence}
 tends to zero as $\theta \to 0$.  This finishes the proof.
\end{proof}

\section{Generalised $L_1$-contraction}
The following result is the main step to obtain Theorem \ref{thm:main}. Part (i) is the $L_1$-contraction similar to our main result \eqref{eq:main contraction}, under some additional assumptions.
Part (ii) is a generalised variant of the $L_1$ contraction, where we may allow the equation itself to vary and thus deduce stability properties,
at the cost of only controlling the time integral of the $L_1$-norm of the solution.

\begin{theorem}  \label{thm:uniqueness}
Let $(A, \xi)$,  $(\tA, \tilde{\xi})$   satisfy Assumption \ref{as:A}, and $\sigma,\tsigma$ satisfy Assumption \ref{as:noise} \eqref{as:sigma}.
Let $u$ and $\tu$ be two entropy solutions of $\cE(A,\sigma,\xi)$ and $\cE(\tA,\tsigma, \txi)$, respectively,
and assume that $u$ has the $(\star)$-property with coefficient $\sigma$.
Then,

(i) if furthermore $A=\tA$ and $\sigma=\tsigma$, then 
\begin{equ}\label{eq:L_1 contraction}
\esssup_{t\in[0,T]}\E\int_x|u(t,x)-\tu(t,x)|\leq\E\int_x|\xi(x)-\txi(x)|.
\end{equ}

(ii) for all $\eps,\delta\in(0,1]$, $\lambda\in[0,1]$ and $\alpha\in(0,1\wedge(m/2))$,  we have 
\begin{equs}
\E \int_{t,x} & |u(t,x)-\tilde{u}(t,x)|  
\leq T\E \int_{x}|\xi(x)-\txi(x)|
\\
&+N\eps^{\frac{2}{m+1}}\big(1+\E\|\nabla\Psi(u)\|_{L_1(Q_T)}\big)
\\
&+T\sup_{|h|\leq\eps}\big(\E\|\txi(\cdot)-\txi(\cdot+h)\|_{L_1(\T^d)}\big)
\\
&+N\eps^{-2}\E\big(\|I_{|u|\geq R_\lambda}(1+|u|)\|_{L_m(Q_T)}^m+
\|I_{|\tu|\geq R_\lambda}(1+|\tu|)\|_{L_m(Q_T)}^m\big)
\\
&+N\Big( \delta^{2\kappa}+ \varepsilon^{2\bar\kappa} \delta^{-1}+ \varepsilon^{-2} \delta^{2\alpha}+\eps^{-2}\lambda^2+\red{\delta^{-1}\sup_{x,r} \frac{|\sigma(x,r)-\tsigma(x,r)|^2_{l_2}}{(1+|r|)^{-(m+1)}} }
%\sup_\zeta|\fra(\zeta)-\tfra(\zeta)|^2
\Big)
\\
&\quad\quad\times\E(1+\|u\|^{\red{m+1}}_{L_{\red{m+1}}(Q_T)}+\|\tu\|_{L_{\red{m+1}}(Q_T)}^{\red{m+1}}),
\label{eq:super inequality}
\end{equs}
where $N$ depends only on $m,K,d,T,\alpha$, and $R_\lambda$ is given by
\begin{equ}\label{eq:R lambda}
R_\lambda=\sup\{R\in[0,\infty]:\,|\fra(r)-\tfra(r)|\leq\lambda, \,\,\forall |r|<R\}.
\end{equ}
\end{theorem}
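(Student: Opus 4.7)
The plan is to execute a stochastic doubling-of-variables argument in the spirit of Kru\v{z}kov--Carrillo, with the $(\star)$-property of $u$ playing the essential role of disposing of the mixed stochastic term that appears upon doubling. First I would apply the entropy inequality \eqref{eq:entropy_inequality} for $u$ with a parameter $a\in\R$, with $\eta_\delta$ a convex $C^2$-approximation of $|\cdot|$ (so that $\eta_\delta''$ concentrates at the origin with support of width $\delta$ and height $\delta^{-1}$), and with test function $\phi_\theta(t,x,s,y)\varrho_\eps(x-y)$; I would then substitute $a=\tu(s,y)$ and integrate over $(s,y)$. Since the stochastic cross term produced this way is precisely the left-hand side of \eqref{eq:strong-entropy}, the $(\star)$-property of $u$ converts it into a bulk term plus $O(\theta^{1-\mu})$. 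The symmetric inequality for $\tu$ uses parameter $b=u(t,x)$ and does not need a $(\star)$-property: one simply takes expectation to kill the resulting martingale. Summing the two inequalities, the matched stochastic cross terms cancel modulo $\theta^{1-\mu}$.

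Next, I would send $\theta\to 0$ via Proposition \ref{prop:theta conv}, then drop the non-negative parabolic dissipation $\eta_\delta''(u-\tu)|\nabla_x\Psi(u)-\nabla_y\Psi(\tu)|^2$. Choosing $\varphi$ to approximate $T^{-1}I_{[0,T]}$ with $\varphi(0)=1$, the remaining terms assemble into the time-averaged $L_1$-difference on the left-hand side plus the initial-datum term $T\E\int|\xi-\txi|$ after sending $\delta\to 0$. Each of the remaining sources of error matches a separate term of \eqref{eq:super inequality}: the three pieces of $|\sigma(x,u)-\tsigma(y,\tu)|^2_{l_2}$ (obtained by triangle inequality through $\sigma(y,u)$ and $\tsigma(y,u)$) produce the $\delta^{2\kappa}$ contribution via Assumption \ref{as:noise}\eqref{as:sigma} and the $\delta$-support of $\eta_\delta''$, the $\eps^{2\bar\kappa}\delta^{-1}$ contribution from the spatial H\"older regularity, and the $\delta^{-1}\sup|\sigma-\tsigma|^2$ term; the Laplacian action $\Delta_x\varrho_\eps(x-y)(q_{\eta_\delta}(u)-q_{\eta_\delta}(\tu))$ is split at the threshold $R_\lambda$ of \eqref{eq:R lambda}, giving on the bulk the $\eps^{-2}\delta^{2\alpha}$ term via an interpolation exploiting $\alpha<m/2$ and the $\eps^{-2}\lambda^2$ term via $|\fra-\tfra|\leq\lambda$, and on the tail the $\eps^{-2}\E\|I_{|u|\geq R_\lambda}(1+|u|)\|_{L_m}^m$ term; finally, replacing $\tu(t,y)$ by $\tu(t,x)$ (and $\txi(y)$ by $\txi(x)$) under the outer integral costs the $\eps^{2/(m+1)}(1+\E\|\nabla\Psi(u)\|_{L_1})$ term via Lemma \ref{lem:frac reg} applied to $\tu$, together with the translation modulus of $\txi$.

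The main obstacle is the degeneracy of the operator near the origin in the regime $m\in(1,2)$, where $\fra^2$ is only $(m-1)$-H\"older and the usual Kru\v{z}kov manipulations lose quantitative control on the spatial regularity of the solution. Lemma \ref{lem:frac reg} together with the two-regime bound \eqref{eq:as Psi} converts the difference $u(t,x)-u(t,y)$ for $|x-y|\lesssim\eps$ into a bound on differences of $\Psi(u)$ at the price of the exponent $2/(m+1)<1$; this exponent has to be balanced against the $\eps^{-2}\delta^{2\alpha}$ parabolic error, and the restriction $\alpha<1\wedge(m/2)$ is chosen precisely so that all the powers of $\eps,\delta,\lambda$ in \eqref{eq:super inequality} remain strictly positive when sent to zero. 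Part (i) then follows from (ii) by taking $A=\tA$ and $\sigma=\tsigma$, which forces $\lambda=0$ and $R_\lambda=\infty$ and thus kills the last three lines of \eqref{eq:super inequality}; sending $\eps,\delta\to 0$ along a suitable path yields the time-averaged $L_1$-contraction, and rerunning the argument with $\varphi$ approximating $I_{[t_1,t_2]}$ for arbitrary $t_1<t_2$ upgrades this to the essential-supremum statement \eqref{eq:L_1 contraction}.
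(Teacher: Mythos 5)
Your overall skeleton --- doubling in $(t,x;s,y)$, the $(\star)$-property for the anticipating stochastic cross term, the $\theta\to0$ limit via Proposition \ref{prop:theta conv}, the triangle-inequality splitting of the noise terms, the $R_\lambda$ splitting, and Lemma \ref{lem:frac reg} to undo the spatial doubling --- matches the paper's proof. But two steps, as you describe them, would fail.

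First, you have the two stochastic cross terms backwards. The term coming from $u$'s entropy inequality after substituting $a=\tu(s,y)$ is a stochastic integral $\int\cdots d\beta^k(t)$ effectively over $t\in(s,s+\theta)$ (recall $\rho$ is supported on positive arguments) with an $\cF_s$-measurable parameter substituted; this is the one that vanishes upon taking expectations. The term from $\tu$'s inequality with $a=u(t,x)$ substituted is an integral $\int\cdots d\beta^k(s)$ over $s\in(t-\theta,t)$ with the \emph{anticipating} parameter $u(t,x)$: it is exactly $F_\theta(t,x,u(t,x))$ of Definition \ref{def:star} (with $g=\eta_\delta'$ and $\tsigma,\tu$ as the tilde data), it is not a martingale, and it is precisely where the $(\star)$-property of $u$ must be invoked. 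Treating it as a martingale, as you propose, both misuses the hypothesis and loses the cross term $-\E\int\sigma^k(x,u)\tsigma^k(y,\tu)\eta_\delta''(u-\tu)\phi_\eps$ needed to complete the square $|\sigma(x,u)-\tsigma(y,\tu)|^2_{l_2}$ in the noise estimate.

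Second, the second-order terms. After the inequality $-a^2-b^2\leq-2ab$ (your ``dropping the dissipation''), the surviving cross term $-2\E\int\eta_\delta''(u-\tu)\nabla_x\Psi(u)\cdot\nabla_y\tPsi(\tu)\phi_\eps$ must be integrated by parts twice (using Definition \ref{def:solution} (ii) and Remark \ref{rem:W1}) and added to the two Laplacian terms, all rewritten against $\D^2_{x_iy_i}\phi_\eps$ as double integrals in $(r,\zeta)$; only then does the integrand collapse to $\eta_\delta''(\zeta-r)|\fra(r)-\tfra(\zeta)|^2$, which is small because $\eta_\delta''$ localizes to $|\zeta-r|\leq\delta$ and because of the interpolation you cite. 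The quantity you assign these bounds to, $\Delta_x\varrho_\eps(x-y)\,(q_{\eta_\delta}(u)-q_{\eta_\delta}(\tu))$, is of size $\eps^{-2}(1+|u|^m+|\tu|^m)$ with no factor of $\delta^{2\alpha}$ or $\lambda^2$ (as $\delta\to0$ it produces $\eps^{-2}$ times $|A(u)-A(\tu)|$-type quantities), so the estimate cannot be closed from the Laplacian terms alone; the difference $\fra-\tfra$ you invoke never appears without folding in the dissipation cross term. A minor further point: to obtain $\E\|\nabla\Psi(u)\|_{L_1(Q_T)}$ (rather than the corresponding norm of $\tPsi(\tu)$) on the right of \eqref{eq:super inequality}, Lemma \ref{lem:frac reg} should be applied to $u$, i.e.\ one replaces $u(t,x)$ by $u(t,y)$, reserving the translation modulus of $\txi$ for the initial data alone.
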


\begin{remark}
Note that all the norms of the solutions on the right-hand side of \eqref{eq:super inequality} are finite by Definition \ref{def:solution}.
\end{remark}

\begin{proof}
The majority of the proof is identical for (i) and (ii), so their separation is postponed to the very end.
Denote $\varrho_\eps=\rho_\eps^{\otimes d}$, and fix a $\varphi \in C^\infty_c((0,T))$ such that 
$\|\varphi\|_{L_\infty([0,T])}\vee\|\partial_t\varphi\|_{L_1([0,T])}\leq 1$.
Introduce, for $\theta,\eps>0$,
\begin{equ}
\phi_{\theta, \varepsilon}(t, x,s,y)= \rho_{\theta}(t-s)\varrho_\varepsilon\left(x-y\right) \varphi \left(\tfrac{t+s}{2}\right),
\,\,
\phi_{\eps}(t,x,y)=\varrho_\eps(x-y)\varphi(t).
\end{equ}
Furthermore,  for each $\delta>0$, let $\eta_\delta\in C^2(\R)$ be defined by
\begin{equ}
%\eta_\delta(r)=\int_0^r\int_0^\zeta\rho_\delta(s)\,ds\,d\zeta
\eta_\delta(0)=\eta_\delta^\prime(0)=0,\quad\eta_\delta^{\prime\prime}(r)=\rho_\delta(|r|).
\end{equ}
%for $r\geq0$.
Note that
\begin{equ}\label{eq:eta prop}
\big|\eta_\delta(r)-|r|\big|\leq\delta,\quad\supp\eta_\delta^{\prime\prime}\subset[-\delta,\delta],
\quad\int_\R|\eta_\delta^{\prime\prime} (r-\zeta)|\,d\zeta\leq2,
\quad|\eta^{\prime\prime}_\delta|\leq 2\delta^{-1}.
\end{equ}
We apply the entropy inequality \eqref{eq:entropy_inequality} with $\eta_\delta(\cdot-a)$
in place of $\eta$ and $\phi_{\theta,\eps}(\cdot,\cdot,s,y)$ in place of $\phi$,
for some $s\in[0,T]$, $y\in\T^d$, $a\in\R$.
Assuming that $\theta$ is sufficiently small, one has $\phi_{\theta, \varepsilon}(0, x,s,y)=0$,
and thus we get
\begin{equs}              
&-\int_{t,x}\eta_\delta(u(t,x)-a) \D_t\phi_{\theta, \varepsilon}(t,x,s,y) %\,dx dt
 \leq 
%\int_{x} \eta_\delta(\xi(x)-a)\phi_{\theta, \varepsilon}(0,x,s,y)+%\, dx
\int_{t,x}q_\delta(u(t,x),a) \Delta_x \phi_{\theta, \varepsilon}(t,x,s,y)  %\, dx dt
\\
&\quad\quad-\int_{t,x} \eta_\delta^{\prime\prime}(u(t,x)-a) | \nabla_x \Psi(u(t,x))|^2\phi_{\theta, \varepsilon}(t,x,s,y)  %\, dxdt
\\
&\quad\quad+\frac{1}{2}\int_{t,x} \eta_\delta^{\prime \prime}(u(t,x)-a)|\sigma(x,u(t,x))|_{l_2}^2 \phi_{\theta, \varepsilon}(t,x,s,y)%\, dxdt 
\\   \label{eq:ineq1}
&\quad\quad+ \int_{0}^T\int_{x}\eta_\delta^{\prime}(u(t,x)-a)\sigma^k(x,u(t,x))\phi_{\theta, \varepsilon}(t,x,s,y) \, d\beta^k(t),
\end{equs}
where 
$$
q_\delta(r,a)=\int_a^r \eta_\delta'(\zeta -a)\fra^2(\zeta) \, d\zeta.
$$
Notice that all the expressions in \eqref{eq:ineq1} are continuous in $(a,s,y)$.
We now substitute $a= \tilde{u}(s,y)$, integrate over $(s,y)$, and take expectations.
For the last term in \eqref{eq:ineq1} this is justified by \eqref{eq:simple}. All of the other terms are continuous in $a$ and can be bounded by $N(|a|^m+X)$ with some constant $N$ and some integrable random variable $X$ (recall \eqref{eq:as fra}), so that  substituting $a= \tilde{u}(s,y)$ and integrating out $s$, $y$, and $\omega$, results in finite quantities.

After writing the analogous inequality with the roles of $u,t,x$ and $\tu,s,y$ reversed,  using the symmetry of $\eta_\delta$, and adding both inequalities, one arrives at
\begin{equs}                
-\E&\int_{t,x,s,y}\eta_\delta(u(t,x)-\tilde{u}(s,y)) (\D_t\phi_{\theta, \varepsilon}(t,x,s,y)+ \D_s\phi_{\theta, \varepsilon}(t,x,s,y))
\\
&\leq\E \int_{t,x,s,y}q_\delta(u(t,x),\tilde{u}(s,y)) \Delta_x \phi_{\theta, \varepsilon}(t,x,s,y) 
\\ 
&\quad+\E \int_{t,x,s,y}\tq_\delta(\tilde{u}(s,y),u(t,x)) \Delta_y \phi_{\theta, \varepsilon}(t,x,s,y)
\\ 
&\quad-\E \int_{t,x,s,y}\eta_\delta^{\prime\prime}(u(t,x)-\tilde{u}(s,y)) | \nabla_x \Psi(u(t,x))|^2\phi_{\theta, \varepsilon}(t,x,s,y)  
\\
&\quad-\E \int_{t,x,s,y}\eta_\delta^{\prime\prime}(\tu(s,y)-u(t,x)) | \nabla_y \tPsi(\tilde{u}(s,y))|^2\phi_{\theta, \varepsilon}(t,x,s,y)  
\\
&\quad+\E \frac{1}{2}\int_{t,x,s,y} \eta_\delta^{\prime\prime}(u(t,x)-\tilde{u}(s,y))|\sigma(x,u(t,x))|^2 \phi_{\theta, \varepsilon}(t,x,s,y)
\\
&\quad+\E \frac{1}{2}\int_{t,x,s,y} \eta_\delta^{\prime\prime}(\tilde{u}(s,y)-u(t,x))|\tsigma(y,\tilde{u}(s,y))|^2 \phi_{\theta, \varepsilon}(t,x,s,y)
\\   
&\quad+ \E \int_{y,s} \left[\int_0^T \int_{x}\eta_\delta^{\prime}(u(t,x)-a)\sigma^k(x,u(t,x))\phi_{\theta, \varepsilon}(t,x,s,y) \, d\beta^k(t)\right]_{a=\tilde{u}(s,y)} 
\\
\label{eq:everything}
&\quad+ \E \int_{t,x} \left[\int_0^T \int_{y}\eta_\delta^{\prime}(\tilde{u}(s,y)-a)\tsigma^k(y,\tilde{u}(s,y))\phi_{\theta, \varepsilon}(t,x,s,y) \,  d\beta^k(s)\right]_{a=u(t,x)} 
\\
&=:\sum_{i=1}^8 B_i.
\end{equs}
Notice that one has
\begin{equ}
\D_t\phi_{\theta, \varepsilon}(t,x,s,y)+ \D_s\phi_{\theta, \varepsilon}(t,x,s,y)=
\rho_\theta(t-s)\varrho_\eps(x-y)(\partial_t\varphi)\left(\tfrac{t+s}{2}\right).
\end{equ}
We now pass to the $\theta\to0$ limit.
For the left-hand side, thanks to the above identity, we may apply
Proposition \ref{prop:theta conv} twice: first, with
\begin{equs}
v&=\tu,&\quad h(t,x,a)&=\frac{\eta_\delta(u(t,x)-a)}{1\vee|u(t,x)|+1\vee |a|},
\\
f&=1\vee |u|,&\quad g(t,x,s,y)&=\varrho_\eps(x-y)(\partial_t\varphi)\left(\tfrac{t+s}{2}\right)
\end{equs}
and second, with the with the roles of $u,t,x$ and $\tu,s,y$ reversed. For $B_1$, we set the roles as
\begin{equs}
v&=\tu,&\quad h(t,x,a)&=\frac{\int_0^{u(t,x)} \eta_\delta'(\zeta -a)\fra^2(\zeta) \, d\zeta}{1\vee|u(t,x)|^m},
\\
f&=1\vee |u|^m,&\quad g(t,x,s,y)&=\Delta_x \phi_{\varepsilon}(\tfrac{t+s}{2},x,y),
\end{equs}
and for $B_2$ we symmetrise as above. For $B_3$ we set
\begin{equ}
v=\tu,\quad h(t,x,a)=\eta_\delta^{\prime\prime}(u(t,x)-a),\quad f=|\nabla\Psi(u)|^2,\quad
g(t,x,s,y)=\phi_{\varepsilon}(\tfrac{t+s}{2},x,y),
\end{equ}
and symmetrically for $B_4$. For $B_5$ we simply change $|\nabla\Psi(u)|^2$ to $|\sigma(u)|^2$ above, and symmetrically for $B_6$.

Next note that $B_7=0$: an $\cF_s$-measurable quantity is substituted in a stochastic integral from $s$ to $s+\theta$, so after taking expectation the term vanishes (to make this argument completely precise, we refer to \eqref{eq:0lambda limit} and \eqref{eq:F rewrite}).
Finally, for $B_8$, we use the $(\star)$-property of $u$: in the notation of Definition \ref{def:star} we choose
$\eta_\delta^\prime$ in place of $g$ and $\varrho_\eps(x-y)$ in place of $\varrho(x,y)$.
After sending $\theta$ to $0$, we thereby obtain
\begin{equs}              
-\E&\int_{t,x,y}\eta_\delta(u(t,x)-\tilde{u}(t,y))\D_t \phi_{ \varepsilon}(t,x,y) 
\\
&\leq\E \int_{t,x,y}q_\delta(u(t,x),\tilde{u}(t,y)) \Delta_x\phi_{ \varepsilon}(t,x,y) 
\\ 
 &\quad+\E \int_{t,x,y}\tq_\delta(\tilde{u}(t,y),u(t,x)) \Delta_y\phi_{ \varepsilon}(t,x,y)
\\ 
&\quad-\E \int_{t,x,y}\eta_\delta^{\prime\prime}(u(t,x)-\tilde{u}(t,y)) | \nabla_x \Psi(u(t,x))|^2\phi_{ \varepsilon}(t,x,y) 
\\  
&\quad-\E \int_{t,x,y}\eta_\delta^{\prime\prime}(u(t,x)-\tilde{u}(t,y)) | \nabla_y \tPsi(\tilde{u}(t,y))|^2\phi_{ \varepsilon}(t,x,y) 
\\
&\quad+\E \frac{1}{2}\int_{t,x,y}\eta_\delta^{\prime\prime}(u(t,x)-\tilde{u}(t,y))|\sigma(x,u(t,x))|_{l_2}^2 \phi_{ \varepsilon}(t,x,y)
\\
&\quad+\E \frac{1}{2}\int_{t,x,y} \eta_\delta^{\prime\prime}(u(t,x)-\tilde{u}(t,y))|\tsigma(y,\tilde{u}(t,y))|_{l_2}^2\phi_{ \varepsilon}(t,x,y)
\\ 
& \quad-  \E \int_{t,x,y}\sigma(x, u(t,x)) \tsigma(y, \tilde{u}(t,y))\eta_\delta^{\prime\prime}(u(t,x)-\tilde{u}(t,y))\phi_{\varepsilon}(t,x,y) 
\\  \label{eq:no-theta}
&=: \sum_{i=1}^7 C_i   .
\end{equs}
By the second and fourth property in \eqref{eq:eta prop}, for the last three terms we can write, using Assumption \ref{as:noise} \eqref{as:sigma}
\begin{equs}
&C_5+C_6+C_7
\\
&=\frac{1}{2}\E \int_{t,x,y} \eta_\delta^{\prime\prime}(u(t,x)-\tilde{u}(t,y))|\sigma(x,u(t,x))-\tsigma(y,\tilde{u}(t,y))|_{l_2}^2\phi_{ \varepsilon} (t,x,y)
\\
&\le   \E \int_{t,x,y} \eta_\delta^{\prime\prime}(u(t,x)-\tilde{u}(t,y))|\sigma(x,u(t,x))-\sigma(x,\tilde{u}(t,y))|_{l_2}^2\phi_{ \varepsilon} (t,x,y)
\\
&\quad+ \E \int_{t,x,y} \eta_\delta^{\prime\prime}(u(t,x)-\tilde{u}(t,y))|\sigma(x,\tilde{u}(t,y))-\sigma(y,\tilde{u}(t,y))|_{l_2}^2\phi_{ \varepsilon}(t,x,y)
\\
&\quad+\E \int_{t,x,y} \eta_\delta^{\prime\prime}(u(t,x)-\tilde{u}(t,y))|\sigma(y,\tilde{u}(t,y))-\tsigma(y,\tilde{u}(t,y))|_{l_2}^2\phi_{ \varepsilon}(t,x,y)
\\
& \le (  \delta^{2\kappa}+ \varepsilon^{2\bar\kappa}\delta^{-1}+\delta^{-1}\red{ \sup_{r,x}\frac{|\sigma(x,r)-\tsigma(x,r)|^2_{l_2}}{(1+|r|)^{-(m+1)}}} ) 
\\
& \times \red{\E(1+\|u\|_{L_{m+1}(Q_T)}^{m+1}+\|\tu\|_{L_{m+1}(Q_T)}^{m+1})}. 
\\
\label{eq:stochastic-terms}
\end{equs}
We emphasize for later use that the bound $|\eta^{\prime\prime}_\delta|\leq 2\delta^{-1}$ is only used in the above estimate.

From now on, if confusion does not arise,  we drop the arguments $t,x,y$, keeping in mind that $u$ is a function of $(t,x)$, $\tilde{u}$ is a function of $(t,y)$, and $\phi_\eps$ is a function of $(t,x,y)$.
By the relation $\D_{x_i}\phi_\varepsilon= - \D_{y_i} \phi_\varepsilon$ we have 
\begin{equs}
C_1& =\E \int_{t,x,y} q_\delta(u,\tilde{u})\D_{x_i}^2 \phi_{\varepsilon}=-\E \int_{t,x,y}   q_\delta(u,\tilde{u})\D^2_{x_iy_i}\phi_\eps,
\end{equs}
and hence
\begin{equs}
C_1=&-\E\int_{t,x,y}\D^2_{x_iy_i}\phi_\eps\int_\tu^u\eta^\prime_\delta(\zeta-\tu)\fra^2(\zeta)\,d\zeta
\\
=&-\E\int_{t,x,y}\D^2_{x_iy_i}\phi_\eps\int_\tu^u\int_\tu^\zeta\eta^{\prime\prime}_\delta(\zeta-r)\fra^2(\zeta)\,dr\,d\zeta
\\
=&  -\E\int_{\tu \leq u} \D^2_{x_iy_i}\phi_\eps\int_\tu^u\int_\tu^uI_{r \leq \zeta}\eta^{\prime\prime}_\delta(\zeta-r)\fra^2(\zeta)\,dr\,d\zeta
\\
&-\E\int_{\tu \geq u} \D^2_{x_iy_i}\phi_\eps\int_u^\tu\int_u^\tu I_{r \geq \zeta}\eta^{\prime\prime}_\delta(\zeta-r)\fra^2(\zeta)\,dr\,d\zeta.
\label{eq:C1}
\end{equs}
Symmetrically, one has
\begin{equs}
C_2=&-\E\int_{t,x,y}\D^2_{x_iy_i}\phi_\eps\int_u^\tu\int_u^\zeta\eta^{\prime\prime}_\delta(\zeta-r)\tfra^2(\zeta)\,dr\,d\zeta
\\
=&  -\E\int_{\tu \leq u} \D^2_{x_iy_i}\phi_\eps\int_\tu^u\int_\tu^u I_{r \geq \zeta}\eta^{\prime\prime}_\delta(\zeta-r)\tfra^2(\zeta)\,dr\,d\zeta
\\
&-\E\int_{\tu \geq u} \D^2_{x_iy_i}\phi_\eps\int_u^\tu \int_u^\tu I_{r \leq \zeta}\eta^{\prime\prime}_\delta(\zeta-r)\tfra^2(\zeta)\,dr\,d\zeta.
\label{eq:C2}
\end{equs}
Notice also that by \eqref{item:chain_ruleW2}  of Definition \ref{def:solution} and Remark \ref{rem:W1}  we have
\begin{equs}    
C_3+C_4 \leq& -2\E \int_{t,x,y}  \eta_\delta^{\prime\prime}(u-\tilde{u})\nabla_x\Psi(u) \cdot\nabla_y \tPsi(\tilde{u}) \phi_{ \varepsilon}
\\
\nonumber
=&-2\E \int_{t,x,y} \phi_\varepsilon \D_{x_i} \Psi(u) \D_{y_i} \int_u^{\tilde{u}} \eta_\delta^{\prime\prime} (u-r)\tfra(r) \, dr 
\\
\nonumber
=&2\E \int_{t,x,y} \D_{y_i}\phi_\varepsilon \D_{x_i} \Psi(u)  \int_u^{\tilde{u}} \eta_\delta^{\prime\prime} (u-r)\tfra(r) \, dr 
\\
\nonumber
=&-2\E \int_{t,x,y} \D^2_{x_iy_i}\phi_\varepsilon \int_{\tilde{u}}^u  \int_\zeta^{\tilde{u}} \eta_\delta^{\prime\prime} (\zeta-r)\tfra(r) \, dr \fra(\zeta)\, d\zeta
\\
=&  2\E\int_{\tu \leq u} \D^2_{x_iy_i}\phi_\eps\int_\tu^u\int_\tu^u I_{r \leq \zeta}\eta^{\prime\prime}_\delta(\zeta-r)\tfra(r) \fra(\zeta) \,dr\,d\zeta
\\
&+2\E\int_{\tu \geq u} \D^2_{x_iy_i}\phi_\eps\int_u^\tu\int_u^\tu I_{r \geq \zeta}\eta^{\prime\prime}_\delta(\zeta-r)\tfra(r) \fra(\zeta)\,dr\,d\zeta.      \label{eq:C3C4}
\end{equs}
Relabelling the variables $r\leftrightarrow \zeta$ in \eqref{eq:C2} (recall that $\eta''_\delta$ is even) and adding \eqref{eq:C1}, \eqref{eq:C2}, and \eqref{eq:C3C4} we obtain 
\begin{equation}                 \label{eq:C1+C4}
C_1+C_2+C_3+C_4 \leq \E\int_{t,x,y} |\D^2_{x_iy_i}\phi_\eps|\int_\tu^u\int_\tu^u \eta^{\prime\prime}_\delta(\zeta-r)|\fra(r)-\tfra(\zeta)|^2 \,dr\,d\zeta.
\end{equation}
Let us set 
\begin{equs}
D:=\int_{\tilde{u}}^u  \int_{\tilde{u}}^u \eta_\delta^{\prime\prime} (\zeta-r)
|\fra(r)-\tfra(\zeta)|^2\, dr \,d\zeta.
\end{equs}
We easily see that (recall
the third property in \eqref{eq:eta prop})
\begin{equs}
|D|&\le D_1+D_2\label{eq:D}
\\
&:=\int_{\tilde{u}}^u  \int_{\tilde{u}}^u \eta_\delta^{\prime\prime} (\zeta-r)
|\fra(\zeta)-\fra(r)|^2\, dr \,d\zeta + 
\int_{\tilde{u}\wedge u}^{u \vee \tu} 
|\fra(\zeta)-\tfra(\zeta)|^2\,d\zeta.
\end{equs}
At this point we make use of Assumption \ref{as:A}, to get the bounds
\begin{equ}
I_{|r-\zeta| \leq \delta, |\zeta|\geq 2 \delta }\left|\fra(r)-\fra(\zeta)\right|
\leq \delta I_{|\zeta|\geq 2 \delta}\sup_{r\in[\zeta-\delta,\zeta+\delta]}|\fra'(r)|
\label{eq:with-delta}             
 \le \delta |\zeta|^{\frac{m-3}{2}}, 
\end{equ}
and 
\begin{equs}              
I_{|r-\zeta| \leq \delta}\left|\fra(r)-\fra(\zeta)\right| &\le
 I_{|r-\zeta|\leq \delta}( \fra(r)+\fra(\zeta))
 \\
&\le
\int_0^{2(|\zeta|\vee \delta)}s^{\frac{m-3}{2}}\,ds
\le  (|\zeta|\vee\delta)^{\frac{m-1}{2}}.  \label{eq:without-delta}
\end{equs}
Combining \eqref{eq:with-delta} and \eqref{eq:without-delta} we get (recall that $\alpha\in(0,1)$) 
\begin{equ}%\label{eq:with alpha delta}
I_{|r-\zeta| \leq \delta, |\zeta|\geq 2 \delta }\left|\fra(r)-\fra(\zeta)\right|
 \le\delta^\alpha |\zeta|^{{\frac{m-1}{2}}-\alpha}.
\end{equ}
Using again \eqref{eq:eta prop},
and that by assumption $m-1-2\alpha>-1$, we have
%as long as $m-1-2\alpha>-1$, or in other words, $2\alpha<m$, one gets
\begin{equs}
D_1
&\le\int_{\tilde{u}\wedge u }^{\tu\vee u}  ( I_{|\zeta|\leq2 \delta} \delta^{m-1}+I_{|\zeta|\geq2 \delta}\delta^{2\alpha}\zeta^{m-1-2\alpha}) \,d\zeta
\\
& \le \big(\delta^m + \delta^{2\alpha}(|u|+ |\tilde{u}|)^{m-2\alpha}  \big)
\le \delta^{2\alpha} \big(1+|u|^{m}+|\tu|^{m}\big).\label{eq:D1}
\end{equs}
To estimate $D_2$, we use the definition of $R_\lambda$ from \eqref{eq:R lambda} and the fact that $\fra$, $\tfra$ are even  to write
\begin{equs}
D_2&\le \int_0^{|u|}\lambda^2+I_{|u|\geq R_\lambda}(1+\zeta^{m-1})\,d\zeta+\int_0^{|\tu|}\lambda^2+I_{|\tu|\geq R_\lambda}(1+\zeta^{m-1})\,d\zeta
\\
&\le \lambda^2\big(|u|+|\tu|)+I_{|u|\geq R_\lambda}(1+|u|^m)+I_{|\tu|\geq R_\lambda}(1+|\tu|^m).\label{eq:D2}
\end{equs}
Combining \eqref{eq:C1+C4}, \eqref{eq:D}, \eqref{eq:D1},  and \eqref{eq:D2},  we finally obtain
\begin{equs}
C_1 & +C_2 +C_3+C_4 
\le\eps^{-2}\big(\lambda^2 + \delta^{2\alpha} \big)
\E(1+\|u\|^{m}_{L_{m}(Q_T)}+\|\tu\|_{L_{m}(Q_T)}^{m})
%\E \left(\|u\|_{L_{\frac{m-1}{2}}}^{\frac{m-1}{2}}\vee\|\tilde u\|_{L_{\frac{m-1}{2}}}^{\frac{m-1}{2}}\right).
\label{eq:C1through4}
\\
&\quad +
\eps^{-2}\E\|I_{|u|\geq R_\lambda}(1+|u|)\|_{L_m(Q_T)}^m+
\eps^{-2}\E\|I_{|\tu|\geq R_\lambda}(1+|\tu|)\|_{L_m(Q_T)}^m.
\end{equs}
Since one has
\begin{equ}
\big|\E\int_{t,x,y}\eta_\delta(u-\tilde{u})\partial_t\phi_\eps-\E\int_{t,x,y}|u-\tilde{u}|\partial_t\phi_\eps\big|\le\delta,
\end{equ}
from \eqref{eq:no-theta}, \eqref{eq:stochastic-terms}, and \eqref{eq:C1through4} one gets
\red{
\begin{equs}
-\E&\int_{t,x,y}|u(t,x)-\tilde{u}(t,y)|\varrho_\varepsilon(x-y)\D_t \varphi(t).
\\
\leq &
N\eps^{-2}\big(\E\|I_{|u|\geq R_\lambda}(1+|u|)\|_{L_m(Q_T)}^m+
\E\|I_{|\tu|\geq R_\lambda}(1+|\tu|)\|_{L_m(Q_T)}^m\big)
\\
& N\big(\delta^{2\kappa}+\eps^{2\bar\kappa}\delta^{-1}+\eps^{-2}\delta^{2\alpha}+\eps^{-2}\lambda^2+\delta^{-1}\sup_{r,x}\frac{|\sigma(x,r)-\tsigma(x,r)|^2_{l_2}}{(1+|r|)^{-(m+1)}})
\\
&\times \E(1+\|u\|^{m}_{L_{m}(Q_T)}+\|\tu\|_{L_{m}(Q_T)}^{m}). \label{eq:before eps limit}
\end{equs}
}
Denote the right-hand side above by $M$.
Let  $s, t \in (0,T)$, with $s < t$, be Lebesgue points of the function
$$
t \mapsto \E \int_{x,y}|u(t,x)-\tilde{u}(t,y)|\varrho_\varepsilon(x-y),
$$
and fix some $\gamma >0$ such that $\gamma< t-s$ and $t+ \gamma<T$. 
We now make use of the freedom of choosing $\varphi$:
choose in \eqref{eq:before eps limit} $\varphi =\varphi_n \in C^\infty_c((0,T))$ 
obeying the bound $\|\varphi\|_{L_\infty([0,T])}\vee\|\partial_t\varphi\|_{L_1([0,T])}\leq 1$,
such that 
$$
\lim_{n \to \infty} \|\varphi_n-\zeta\|_{H^1_0((0,T))} = 0,
$$
where 
$\zeta : [0,T] \to \bR$ is such that $\zeta(0)=0$ and $\zeta'=\gamma^{-1}I_{s,s+\gamma}-\gamma^{-1}I_{t,t+\gamma}$.
After letting $n \to \infty$ we obtain 
\begin{equs}
\frac{1}{\gamma}&\E\int_t^{t+ \gamma} \int_{x,y}|u(r,x)-\tilde{u}(r,y)|\varrho_\varepsilon(x-y) \, dr 
\\
&\leq M+ \frac{1}{\gamma}\E\int_s^{s+ \gamma} \int_{x,y}|u(r,x)-\tilde{u}(r,y)|\varrho_\varepsilon(x-y) \,  dr,
\end{equs}
which, after letting $\gamma \downarrow 0$,  gives
\begin{equs}
\E & \int_{x,y}|u(t,x)-\tilde{u}(t,y)|\varrho_\varepsilon(x-y) 
\\
&\leq M+ \E \int_{x,y}|u(s,x)-\tilde{u}(s,y)|\varrho_\varepsilon(x-y).
\end{equs}
Notice that the above inequality holds for almost all $s \leq t$. After averaging over $s \in (0,\gamma)$ for some $\gamma>0$ we obtain
\begin{equs}
\E & \int_{x,y}|u(t,x)-\tilde{u}(t,y)|\varrho_\varepsilon(x-y) 
\\
&\leq M+ \frac{1}{\gamma}\E\int_0^{\gamma} \int_{x,y}|u(s,x)-\tilde{u}(s,y)|\varrho_\varepsilon(x-y)\,ds.
\end{equs}
Letting $\gamma \to 0$, we obtain by virtue of Lemma \ref{lem:initial-condition},
\begin{equs}
\E & \int_{x,y}|u(t,x)-\tilde{u}(t,y)|\varrho_\varepsilon(x-y) 
\\
&\leq M+ \E \int_{x,y}|\xi(x)-\txi(y)|\varrho_\varepsilon(x-y).\label{eq:fork}
\end{equs}
To prove (ii), we integrate with respect to $t$ over $[0,T]$,
% and use the continuity of translations in $L_1$ to get
and write
\begin{equs}
\E & \int_{t,x,y}|u(t,x)-\tilde{u}(t,y)|\varrho_\varepsilon(x-y) 
\\
&\leq TM+T\sup_{|h|\leq\eps}\E\|\txi(\cdot)-\txi(\cdot+h)\|_{L_1(\T^d)}
+T\E \int_{x}|\xi(x)-\txi(x)|.
\end{equs}
Combining this with the fact that
\begin{equs}
\Big|\E & \int_{t,y}|u(t,y)-\tilde{u}(t,y)| -\E \int_{t,x,y}|u(t,x)-\tilde{u}(t,y)|\varrho_\varepsilon(x-y) \Big|
\\
&\leq
\E \int_{t,x,y}|u (t,x)-u(t,y)|\varrho_\varepsilon(x-y),
\end{equs}
and recalling Lemma \ref{lem:frac reg}, the estimate \eqref{eq:super inequality} is proved.

Moving on to (i), first note that in this case we can take $\lambda=0$ and  $R_\lambda=\infty$. 
Since also $\sigma=\tsigma$, we have
\begin{equ}
M=M(\eps,\delta)=N\big(\delta^{2\kappa}+\eps^{2\bar\kappa}\delta^{-1}+\eps^{-2}\delta^{2\alpha})
\E(1+\|u\|^{\red{m+1}}_{L_{\red{m+1}}(Q_T)}+\|\tu\|_{L_{\red{m+1}}(Q_T)}^{\red{m+1}}).
\end{equ}
%Therefore, if we can send $\eps$ and $\delta$ simultaneously to $0$ in such a way that $\eps^{2\bar\kappa}\delta^{-1}$ and $\eps^{-2}\delta^{2\alpha}$ vanishes in the limit, then we have $M\to0$.
%This is indeed possible - in the $\bar\kappa=1$ case this is due to the fact that one can choose $2\alpha>1$, and in general, this is precisely the source of our condition on $\bar \kappa$: 
We now choose $\nu$ such that
$\nu \in ((m\wedge2)^{-1},\bar\kappa)$ and then  $\alpha<1\wedge(m/2)$ such that
$-2+(2\alpha)(2\nu)>0$. Setting $\delta= \eps^{2\nu}$ then yields $M \to 0$ as $\eps \to 0$. Note that for a countable sequence $\eps_n\to 0$, inequality \eqref{eq:fork} holds for almost all $t$ for all $n$, and hence passing to the $n\to\infty$ limit, we get, using again the continuity of translations in $L_1$,
\begin{equs}
\E& \int_{x}|u(t,x)-\tilde{u}(t,x)|
\leq \E \int_{x}|\xi(x)-\tilde{\xi}(x)|
\end{equs}
for almost all $t$. Taking essential supremum in $t\in[0,T]$, we get \eqref{eq:L_1 contraction}.
\end{proof}

\begin{theorem}  \label{thm:uniqueness sqrt}
%Let $A$, $\tA$ satisfy Assumption \ref{as:A}, $\sigma,\tsigma$ satisfy Assumption \ref{as:sigma sqrt}, and $\xi$, $\txi$ satisfy Assumption \ref{as:ic}.
%Let $u$ and $\tu$ be two entropy solutions of $\cE(A,\sigma,\xi)$ and $\cE(\tA,\tsigma, \txi)$, 
%and assume that $u$ has the $(\star)$-property with coefficient $\sigma$.
Assume the setting of Theorem \ref{thm:uniqueness}, replacing Assumption \ref{as:noise} \eqref{as:sigma} by Assumption \ref{as:noise} \eqref{as:sigma sqrt}.
Then,

(i) if furthermore $A=\tA$ and $\sigma=\tsigma$, then 
\begin{equ}\label{eq:L_1 contraction sqrt}
\esssup_{t\in[0,T]}\E\int_x|u(t,x)-\tu(t,x)|\leq\E\int_x|\xi(x)-\txi(x)|.
\end{equ}

(ii) for all $\eps,\delta\in(0,1]$, $\lambda\in[0,1]$ and $\alpha\in(0,1\wedge(m/2))$, we have
\begin{equs}
\E \int_{t,x} & |u(t,x)-\tilde{u}(t,x)|  
\leq T\E \int_{x}|\xi(x)-\txi(x)|
\\
&+N\eps^{\frac{2}{m+1}}\big(1+\E\|\nabla\Psi(u)\|_{L_1(Q_T)}\big)
\\
&+T\sup_{|h|\leq\eps}\big(\E\|\txi(\cdot)-\txi(\cdot+h)\|_{L_1(\T^d)}\big)
+N\delta^{-2}|\log\delta|^{-1}\sup_{r}\red{|\sigma(r)-\tsigma(r)|^2_{l_2}}
\\
&+N\eps^{-2}\E\big(\|I_{|u|\geq R_\lambda}(1+|u|)\|_{L_m(Q_T)}^m+
\|I_{|\tu|\geq R_\lambda}(1+|\tu|)\|_{L_m(Q_T)}^m\big)
\\
&+N\Big(|\log\delta|^{-1}+ \varepsilon^{-2} \delta^{2\alpha}+\eps^{-2}\lambda^2
%\sup_\zeta|\fra(\zeta)-\tfra(\zeta)|^2
\Big)
\\
&\quad\quad\times\E(1+\|u\|^{m}_{L_{m}(Q_T)}+\|\tu\|_{L_{m}(Q_T)}^{m})
\label{eq:super inequality 2}
\end{equs}
where $N$ depends only on $m,K,d,T,\alpha$, and $R_\lambda$ is as in \eqref{eq:R lambda}.
\end{theorem}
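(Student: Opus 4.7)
The plan is to follow the proof of Theorem \ref{thm:uniqueness} essentially verbatim, modifying only the entropy approximation $\eta_\delta$. The obstruction to overcome is that under Assumption \ref{as:noise}(\ref{as:sigma sqrt}) the diffusion $\sigma$ has no spatial dependence and only $1/2$-Hölder continuity in $u$, so the Hölder gain $|r-\zeta|^{2\kappa}$ that produced the $\delta^{2\kappa}$ bound in \eqref{eq:stochastic-terms} is no longer available. The remedy is to replace the mollifier-type $\eta_\delta$ of Theorem \ref{thm:uniqueness} by a logarithmic regularisation of $|\cdot|$, for which $\eta_\delta''(r)|r|$ is small even without extra Hölder control on $\sigma$.

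Concretely, I would set $\eta_\delta(0)=\eta_\delta'(0)=0$ and (after smoothing near $|r|=\delta$)
\[
\eta_\delta''(r) = c_\delta^{-1}\frac{I_{|r|\leq\delta}}{|r|+\delta^2},\qquad c_\delta = 2\int_0^\delta \frac{ds}{s+\delta^2}\sim 2|\log\delta|\text{ as }\delta\downarrow 0.
\]
The normalisation enforces $\int_\R \eta_\delta''=2$, hence $\eta_\delta'(r)=\mathrm{sgn}(r)$ for $|r|\geq\delta$ and $\bigl|\eta_\delta(r)-|r|\bigr|\lesssim\delta$. Critically, $\eta_\delta''$ inherits the two properties used in \eqref{eq:eta prop} to bound $C_1+C_2+C_3+C_4$, namely $\supp\eta_\delta''\subset[-\delta,\delta]$ and $\int_\R|\eta_\delta''(r-\zeta)|\,d\zeta\leq 2$, so the whole chain of deterministic estimates yielding \eqref{eq:C1through4} carries over unchanged. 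By maximising $r\mapsto r/(r+\delta^2)$ on $[0,\delta]$ one also obtains
\[
\|\eta_\delta''\|_\infty\lesssim \delta^{-2}|\log\delta|^{-1},\qquad \sup_r \eta_\delta''(r)|r|\lesssim |\log\delta|^{-1},
\]
and these are precisely the two quantities that will drive the new stochastic estimate.

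The only substantive change occurs in the treatment of $C_5+C_6+C_7$. Since $\sigma$ is $x$-independent, the cross term $|\sigma(x,\tu)-\sigma(y,\tu)|^2$ in \eqref{eq:stochastic-terms} vanishes, which removes the $\varepsilon^{2\bar\kappa}\delta^{-1}$ contribution altogether. The remaining two pieces split as
\begin{equs}
\E\int_{t,x,y}\eta_\delta''(u-\tu)&|\sigma(u)-\sigma(\tu)|_{l_2}^2\phi_\eps \lesssim \sup_r\bigl(\eta_\delta''(r)|r|\bigr)\lesssim |\log\delta|^{-1},
\\
\E\int_{t,x,y}\eta_\delta''(u-\tu)&|\sigma(\tu)-\tsigma(\tu)|_{l_2}^2\phi_\eps\lesssim \|\eta_\delta''\|_\infty \sup_{x,r}|\sigma-\tsigma|_{l_2}^2 \lesssim \delta^{-2}|\log\delta|^{-1}\sup_{x,r}|\sigma-\tsigma|_{l_2}^2,
\end{equs}
reproducing exactly the two modified terms in \eqref{eq:super inequality 2}. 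Plugging this into the otherwise unchanged portion of the proof from \eqref{eq:no-theta} through \eqref{eq:fork} then yields (ii).

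For (i), one has $\lambda=0$, $R_\lambda=\infty$ and $\sigma=\tsigma$, so the quantity $M$ reduces to $N(|\log\delta|^{-1}+\eps^{-2}\delta^{2\alpha})$. Fixing $\alpha\in(0,1\wedge(m/2))$ and taking $\delta=\eps^\nu$ for any $\nu>1/\alpha$ drives both terms to $0$ as $\eps\to 0$, after which the same Lebesgue-point / initial-condition argument used in Theorem \ref{thm:uniqueness} produces \eqref{eq:L_1 contraction sqrt}. I expect the only genuine obstacle to be the construction of $\eta_\delta$: it must simultaneously be compactly supported (so that the $\fra$-analysis of $C_1$--$C_4$ is preserved), be normalised so that $\eta_\delta\to|\cdot|$, and satisfy $\sup_r\eta_\delta''(r)|r|\lesssim|\log\delta|^{-1}$. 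The logarithmic choice above meets all three requirements, and the rest of the argument is structurally identical to Theorem \ref{thm:uniqueness}.
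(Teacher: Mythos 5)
Your proposal is correct and follows essentially the same route as the paper: the paper likewise replaces $\eta_\delta$ by a Yamada--Watanabe-type logarithmic regularisation (a mollification of $\zeta\mapsto \zeta^{-1}|\log\delta|^{-1}I_{[\delta^2/2,\delta/2]}$) with exactly the properties you isolate, namely $\supp\eta_\delta''\subset[-\delta,\delta]$, $\int|\eta_\delta''|\leq 2$, $|\eta_\delta''(r)r|\lesssim|\log\delta|^{-1}$ and $|\eta_\delta''|\lesssim\delta^{-2}|\log\delta|^{-1}$, and observes that the pointwise bound on $\eta_\delta''$ enters only in the stochastic term. The only cosmetic difference is in part (i), where the paper sends $\delta\to0$ first and then $\eps\to0$ instead of coupling $\delta=\eps^\nu$; both are valid.
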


\begin{proof}
The main difference to the previous proof is the choice of $\eta_\delta$. We now take, similarly to \cite{YaWa},
\begin{equ}
\eta_\delta(0)=\eta_\delta^\prime(0)=0,\quad
\eta_\delta^{\prime\prime}(r)=\Big(\rho_{\delta^2/4}\ast\Big(\zeta\mapsto\frac{1}{\zeta|\log\delta|}I_{\zeta\in[\delta^2/2,\delta/2]}\Big)\Big)(|r|).
\end{equ}
We again have the first three properties in \eqref{eq:eta prop}. 
In place of the fourth, however, we now have $|\eta_\delta''(r)r|\leq 2|\log\delta|^{-1}$
and $|\eta_\delta''|\leq \delta^{-2}|\log\delta|^{-1}$.
Hence, in \eqref{eq:stochastic-terms} we get (recall that $\sigma$ does not depend on $x$)
\begin{equ}
C_5+C_6+C_7\lesssim|\log\delta|^{-1}+\delta^{-2}|\log\delta|^{-1}\sup_{r}\red{|\sigma(r)-\tsigma(r)|_{l_2}^2}.
\end{equ}
As pointed out in the previous proof, \eqref{eq:stochastic-terms} is the only point where a pointwise bound on $|\eta_\delta''|$ is used, and thus  the bound \eqref{eq:super inequality 2} can be obtained in the same way as \eqref{eq:super inequality}.

The proof of part (i) is also very similar, except for the passage to the $\eps,\delta\to 0$ limit.
As before, we take $\lambda=0$, $R_\lambda=\infty$, and
since the $\eps^{2\bar\kappa}\delta^{-1}$ term is now not present, there are no negative powers of $\delta$, so we may pass to the $\delta\to 0$ limit first.
This eliminates the $\eps^{-2}\delta^{2\alpha}$ term, after which no negative power of $\eps$ is left, so we then may pass to the $\eps\to 0$ limit. The proof is then concluded precisely as above.
\end{proof}

\section{Approximation and proof of Theorems \ref{thm:main}-\ref{thm:stability} }
To approximate the coefficients of \eqref{eq:main_equation}, we take 
\begin{equ}        \label{eq:apr-xi}
\red{ \sigma_n:=\rho_{1/n}^{\otimes(d+1)}\ast \bar \sigma_n,\, \bar \sigma_n(x,r)=\sigma(x,-n \vee( r\wedge n)), } \, 
\xi_n:=\rho_{1/n}^{\otimes d}\ast (-n\vee(\xi\wedge n)).
\end{equ}
If $\sigma$ and $\xi$ satisfy Assumption \ref{as:noise} (\ref{as:sigma}) (or (\ref{as:sigma sqrt})) and  Assumption \ref{as:A} (\ref{as:ic}), respectively, with a constant $K\geq1$, then the same holds for $\sigma_n$ and $\xi_n$, with, say, constant $2K$.
It is also clear that $\sigma_n\in C^\infty(\T^d\times\R)$ with bounded derivatives. 
\red{The following red part should be removed: Indeed, for any $x \in \bT^d$, $k\in\N$, and $r\in[k,k+1]$, one can write
\begin{equs}
|\D_r\sigma_n(x,r)|_{l_2}&=\big|\D_r\big((\rho_{1/n}^{\otimes(d+1)}\ast\sigma(x,r)-\sigma(x,k)\big)\big|_{l_2}
\\
&\leq n^{d+2}\sup_{r\in[k,k+2]}|\sigma(x,r)-\sigma(x,k)|_{l_2}\leq 2K n^{d+2}.
\end{equs}
Similar argument shows the uniform bound on $|\nabla_x\sigma_n(x,r)|$.}
Also, $\xi_n$ is a bounded  $\mathcal{F}_0$-measurable $C^{k}(\bT^d)$-valued random variable for any $k\in\N$, and
\begin{equ}\label{eq:approx sigma xi}
\red{\sup_{x,r}\frac{|\sigma(x,r)-\sigma_n(x,r)|^2_{l_2}}{(1+|r|)^{m+1}} }\underset{n\rightarrow\infty}{\rightarrow}0,
\quad
\E\|\xi-\xi_n\|_{L_{m+1}(\T^d)}^{m+1}\underset{n\rightarrow\infty}{\rightarrow}0.
\end{equ}
The approximation of $A$ is a bit more subtle so let us state it separately.
\begin{proposition}
Let $A$ satisfy Assumption \ref{as:A} (\ref{as:A first}) with a constant $K\geq 1$.
Then, for all $n$  there exists a function $A_n\in C^\infty(\R)$ with bounded derivatives,
satisfying Assumption \ref{as:A} (\ref{as:A first}) with constant $3K$,
such that $\fra_n(r)\geq 2/n$, and 
\begin{equ}\label{eq:approx A}
\sup_{|r|\leq n}|\fra(r)-\fra_n(r)|\leq 4/n.
\end{equ}
\end{proposition}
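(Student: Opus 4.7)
The plan is to construct $A_n$ by regularising $\fra$ in three stages---truncation for large $|r|$ (to guarantee that $A_n$ has bounded derivatives), mollification (to obtain smoothness), and an additive shift by $2/n$ (to secure the lower bound $\fra_n \geq 2/n$)---and then to set $A_n(r) := \int_0^r \fra_n(s)^2\,ds$. Concretely, I would fix a smooth, non-negative, even mollifier $\tilde{\rho}$ supported in $[-1,1]$, let $b_n$ be a bounded even function that agrees with $\fra$ on $[-(n{+}1),n{+}1]$ and equals the constant $\fra(n{+}1)$ on $\R \setminus [-(n{+}2),n{+}2]$, and define, for a scale $\eta_n$ chosen as a sufficiently small power of $n^{-1}$ depending on $m$,
\[
\fra_n := \tilde{\rho}_{\eta_n} \ast b_n + \tfrac{2}{n}, \qquad A_n(r):=\int_0^r \fra_n(s)^2\,ds.
\]
Then $\fra_n$ is smooth and even, so $A_n$ is smooth and odd; the bound $\fra_n \geq 2/n > 0$ makes $A_n$ strictly increasing; and the boundedness of $b_n$ combined with the smoothness of the mollifier yields $A_n \in C^\infty(\R)$ with bounded derivatives of every order.

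To verify Assumption \ref{as:A}(\ref{as:A first}) with $K$ replaced by $3K$, I would first integrate $|\fra'(r)| \leq K|r|^{(m-3)/2}$ on $(0,\eta_n]$ to get the pointwise bound $|\fra(r)| \leq K + \tfrac{2K}{m-1}|r|^{(m-1)/2}$ near the origin, so that $|\fra_n(0)| \leq 2K + 2/n \leq 3K$ for $\eta_n$ small and $n$ large. The lower bound $3K\fra_n(r) \geq I_{|r|\geq 1}$ follows from the analogous bound for $\fra$ together with the smallness of the mollification error. For the two-sided inequality on $\Psi_n$, the key observation is that, after choosing $\eta_n$ small enough to absorb the mollification error into the $+2/n$ shift, one has $\fra_n \geq \fra$ pointwise on $[-n,n]$; hence $|\Psi_n(r)-\Psi_n(\zeta)| = |\int_{\zeta}^{r}\fra_n| \geq |\Psi(r)-\Psi(\zeta)|$ and the required inequality transfers verbatim, with the constant $3K$ to spare. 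The uniform approximation $\sup_{|r|\leq n}|\fra(r) - \fra_n(r)| \leq 4/n$ is then a routine mollification estimate, exploiting the H\"older-type modulus of $\fra$ near $0$ and its Lipschitz character on each compact set bounded away from the origin.

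The main obstacle is the derivative bound $|\fra_n'(r)| \leq 3K|r|^{(m-3)/2}$ for all $r>0$. For $r \gg \eta_n$ one writes $\fra_n'(r) = (\tilde{\rho}_{\eta_n} \ast \fra')(r)$ and applies the pointwise bound on $\fra'$; the estimate then picks up a factor $(1 + \eta_n/r)^{|m-3|/2}$, which can be made arbitrarily close to $1$---in particular absorbed into $3K$---provided $\eta_n/r$ is small enough (only the ratio depending on $m$). The delicate regime is $r \lesssim \eta_n$, where the pointwise bound on $\fra'$ is no longer enough: here I would exploit evenness of $\fra_n$ (which forces $\fra_n'(0)=0$ and vanishing of all odd-order Taylor coefficients at the origin) together with integrability of $\fra'$ near $0$ (valid since $m>1$ gives $(m-3)/2 > -1$) to match the vanishing of $r^{(m-3)/2}$ as $r \to 0^+$ when $m \geq 3$. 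Choosing $\eta_n$ sufficiently small as a function of $m$, and possibly using a mollifier with sufficiently many vanishing moments so that higher even derivatives of $\fra_n$ at $0$ are controlled for large $m$, is the crux of closing this estimate; once it is in place, all remaining properties follow from elementary manipulations of the mollification.
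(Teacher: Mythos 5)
Your construction differs from the paper's in one decisive respect: you truncate $\fra$ only for large $|r|$ and mollify it unchanged near the origin, whereas the paper mollifies $2/n+\fra(3\eps_n\vee|r|\wedge 3n)$, i.e.\ it also \emph{flattens} $\fra$ to a constant on a small neighbourhood $[-3\eps_n,3\eps_n]$ of the origin before mollifying (with $\eps_n$ chosen through the modulus of continuity of $\fra$ on $[-3n,3n]$, so that \eqref{eq:approx A} still holds). This flattening makes $\fra_n$ constant on $[-2\eps_n,2\eps_n]$, so $\fra_n'\equiv 0$ there and the bound $|\fra_n'(r)|\le N K|r|^{\frac{m-3}{2}}$ near the origin is trivial; away from the origin one just uses $|r-s|\ge |r|/2$ on the support of the mollifier. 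The step you correctly flag as the crux is exactly the step your construction cannot close. For $m>5$ the required bound forces $\fra_n'(r)=o(r)$ as $r\to0$, i.e.\ $\fra_n''(0)=0$; but with your $\fra_n=\tilde\rho_{\eta_n}\ast b_n+2/n$ one has $\fra_n''(0)=\int\tilde\rho_{\eta_n}'(s)\,b_n'(-s)\,ds$, and for the model case $\fra(r)=c|r|^{\frac{m-1}{2}}$ an integration by parts gives $\fra_n''(0)\sim c(m-3)\,\eta_n^{\frac{m-5}{2}}\neq0$. Hence $\fra_n'(r)\sim \fra_n''(0)\,r$, which is \emph{not} $O(|r|^{\frac{m-3}{2}})$ as $r\to0$ when $m>5$. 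Your proposed remedy --- a mollifier with vanishing moments --- does not help: vanishing moments annihilate polynomial components of $b_n'$, while $\fra'(s)\sim \mathrm{sgn}(s)|s|^{\frac{m-3}{2}}$ is not polynomial near $0$, so the offending even derivatives at the origin survive. (Even for $1<m<3$, where the bound does hold without flattening, the admissible constant degenerates as $m\downarrow1$, so the stated $3K$ is not attainable by your route.) The fix is the paper's: replace $b_n$ near the origin by the constant value $\fra(3\eps_n)$ before mollifying.

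A secondary, smaller gap: you claim the lower bound on $|\Psi_n(r)-\Psi_n(\zeta)|$ ``transfers verbatim'' from $\fra_n\ge\fra$. That comparison only holds on the bounded interval where you did not truncate; for $|r|$ beyond the truncation level $b_n$ is constant and $\fra_n$ may well be \emph{smaller} than $\fra$. The paper therefore handles $|r|\vee|\zeta|\ge2$ by a separate three-case argument using only $K\fra_n\ge I_{|\cdot|\ge1}$, and you would need to do the same; in particular, the case where $r$ and $\zeta$ lie on opposite sides of $[-1,1]$ requires splitting the increment at $\pm1$ and costs a factor (whence the constant $3K$ rather than $K$).
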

\begin{proof}
Take a symmetric mollifier $\bar\rho_\eps$ supported on $[-\eps, \eps]$, for instance $\bar\rho_\eps(r):=\int_\R\rho_\eps(r+s)\rho_\eps(s)\,ds$.
Define $\eps_n:=\sup\{\eps\in(0,1]:\,|\fra(r)-\fra(\zeta)|\leq 1/n,\,\forall|r|\leq 3n,\,|\zeta-r|\leq3\eps\}>0$.
We then claim that 
\begin{equ}
A_n(r)=\int_0^r\fra_n^2(\zeta)\,d\zeta,\quad
\fra_n(r)=\bar\rho_{\eps_n}\ast\big(2/n+\fra(3\eps_n\vee |r| \wedge 3n)\big),
\end{equ}
behaves as stated.

It is trivial that $A_n$ is smooth, has bounded derivatives, and that $\fra_n\geq 2/n$.
The bound \eqref{eq:approx A} follows from the definition of $\eps_n$.
To verify Assumption \ref{as:A} (\ref{as:A first}), the first bound in \eqref{eq:as fra} is obvious.
As for the second, we have $\fra_n'(r)=0$ for $|r|\leq 2\eps_n$, while for $|r|>2\eps_n$, one has
\begin{equ}
|\fra_n'(r)|\leq\sup_{\zeta\in[r-\eps_n,r+\eps_n]}|\fra'(\zeta)|\leq\sup_{\zeta\in[r/2,2r]}|\fra'(\zeta)|\leq 2K|r|^{\frac{m-3}{2}}.
\end{equ}
For \eqref{eq:as Psi} notice that by choosing $\eps_n$ as above, we have that on $[-2n,2n]$, $\fra_n\geq \fra$.
This easily implies the bounds $K\fra_n(r)\geq I_{|r|\geq 1}$ and
\begin{equ}
K|\Psi_n(r)-\Psi_n(\zeta)|
  \geq K|\Psi(r)-\Psi(\zeta)|, \quad \text{if}\ |r|\vee|\zeta|\leq 2.
%  \left\{
%  \begin{array}{@{}ll@{}}
%    |r-\zeta|, & \text{if}\ |r|\wedge|\zeta|\geq 1, \\
%    K|\Psi(r)-\Psi(\zeta)|, & \text{if}\ |r|\vee|\zeta|\leq 2.
%  \end{array}\right.
\end{equ} 
If $|r|\vee|\zeta|\geq 2$, we separate three cases:
\begin{enumerate}[(i)]
\item if $|r|\wedge|\zeta|\geq 1$, and $r$ and $\zeta$ have the same sign, then simply from 
$K\fra_n(r)\geq I_{|r|\geq 1}$ we get $K|\Psi_n(r)-\Psi_n(\zeta)|\geq |r-\zeta|$.
\item if $|r|\wedge|\zeta|\geq 1$, and $r$ and $\zeta$ have the opposite sign, then by symmetry we may assume $r\geq 2$, $\zeta\leq-1$. We can then write
\begin{equs}
K|\Psi_n(r)-\Psi_n(\zeta)|&\geq K\big(\Psi_n(r)-\Psi_n(1)\big)+K\big(\Psi_n(-1)-\Psi_n(\zeta)\big)
\\
&\geq r-1-1-\zeta=r-\zeta-2\geq (1/3)(r-\zeta).
\end{equs}
\item if $|r|\wedge|\zeta|\leq 1$, then by symmetry we may assume $r\geq2$, $|\zeta|\leq 1$. We can then write
\begin{equ}
K|\Psi_n(r)-\Psi_n(\zeta)|\geq K\big(\Psi_n(r)-\Psi_n(1)\big)\geq r-1\geq(1/3)(r-\zeta).
\end{equ}
\end{enumerate}
The proof is finished.
\end{proof}

Taking $A_n$, $\sigma_n$, and $\xi_n$ as above, by \cite[Sec~4]{DHV16}, for every $n$ the Cauchy problem $\cE(A_n,\sigma_n,\xi_n)$ has a unique solution $u_n$ in the $L_2$ sense, that is,  $u_n$ is a predictable,   continuous $L_2(\T^d)$-valued process,
$u_n\in L_2(\Omega_T, W^1_2(\T^d))$, $\nabla A_n(u_n)\in L_2(\Omega_T, L_2(\T^d))$, and the equality
\begin{equ}
(u_n(t,\cdot),\phi)=(\xi_n,\phi)
-\int_0^t(\nabla A_n(u_n(s,\cdot)),\nabla \phi)\,ds
+\int_0^t(\sigma_n^k(\cdot,u_n(s,\cdot)),\phi)\,d\beta^k_s\,
\end{equ}
holds for all $\phi\in C^\infty(\T^d)$, almost surely for all $t\in[0,T]$.
For such processes
It\^o's formula holds for $\|\cdot\|_{L_2(\T^d)}^2$ and $\|\cdot\|_{L_{m+1}(\T^d)}^{m+1}$, (see \cite[Sec~3]{K_Ito} and the approximation argument in \cite[Lemma 2]{DG15}, respectively) which implies the uniform estimates
\begin{equs}
\E \sup_{t \leq T} \| u_n\|_{L_2(\bT^d)}^p + \E \|\nabla \Psi_n  (u_n) \|_{L_2(Q_T)}^p &\leq N ( 1+ \E \|\xi_n\|_{L_2(\bT^d)}^p)
\\
\E \sup_{t \leq T} \|u_n\|_{L_{m+1}(Q_T)}^{m+1} &\leq N(1+\E \|\xi_n\|_{L_{m+1}(\bT^d)}^{m+1}),
\end{equs}
for all $p \geq 2$.
Applying also It\^o's formula for the function $u \to \int_{x} \int_0^u A_n(s) \, ds $ and using the above estimate  yields
\begin{equs}         
\E \| \nabla A_n(u_n)\|_{L_2(Q_T)}^2  \leq N(1+\E \|\xi_n\|_{L_{m+1}(\bT^d)}^{m+1}).
\end{equs}
In these estimates, the constant $N$ depends only on $K, T, d,p$ and $m$ (but not on $n\in \bN$). Notice that $\xi_n$ are bounded by $n$, which implies that the right hand side of the above inequalities is finite. Moreover, by construction of $\xi_n$ one concludes that for all $p\geq 2$
\begin{equs}\label{eq:uniform}
\E \sup_{t \leq T} \| u_n\|_{L_2(\bT^d)}^p + \E \|\nabla \Psi_n  (u_n) \|_{L_2(Q_T)}^p &\leq N ( 1+ \E \|\xi\|_{L_2(\bT^d)}^p),
\\
\label{eq:uniform-m+1}
\E \sup_{t \leq T} \|u_n\|_{L_{m+1}(Q_T)}^{m+1}+ \E \| \nabla A_n(u_n)\|_{L_2(Q_T)}^2 &\leq N(1+\E \|\xi\|_{L_{m+1}(\bT^d)}^{m+1}).
\end{equs}
with $N$ depending only on $K, T, d,p$ and $m$.
Finally, since $\fra_n\geq 2/n>0$, we have $|\nabla u_n|\leq N(n)|\nabla\Psi_n(u_n)|$, and so by \eqref{eq:uniform}, we have the ($n$-dependent) bound
\begin{equ}\label{eq:gradient}
\E\|\nabla u_n\|_{L_2(Q_T)}^p<\infty.
\end{equ}
%Using the fact that $\|\xi_n\|_{L_2(\T^d)}$ is a bounded random variable and that $A_n$ is nondegenerate, we also have the $n$-dependent estimates
%\begin{equ}
%\E\|u_n\|_{L_2([0,T];W^1_2(\T^d))}^2\leq N(n)<\infty.
%\end{equ}

\begin{lemma}\label{lem:strong entropy}
The functions $u_n$ above, have the $(\star)$-property with coefficient $\sigma_n$. If moreover $\|\xi\|_{L_{2}(\bT^d)}$ has finite moments up to order $4$, then the constant $N$ in \eqref{eq:strong-entropy} is independent of $n$.
\end{lemma}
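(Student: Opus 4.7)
The plan is to exploit the fact that, since $\fra_n\geq 2/n$ and all approximating coefficients are smooth with bounded derivatives, $u_n$ is a strong solution with $\E\|\nabla u_n\|_{L_2(Q_T)}^p<\infty$ by \eqref{eq:gradient}. This smoothness makes It\^o's formula applicable to nonlinear functions of $u_n(r,x)$ in the time variable $r$, and we use it to compute $\E F_\theta(t,x,u_n(t,x))$ up to the advertised error $O(\theta^{1-\mu})$.

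The core calculation is an It\^o expansion of $g(\tu(s,y)-u_n(t,x))$ along the trajectory $r\mapsto u_n(r,x)$ on $[s,t]$, for $s\in[t-\theta,t]$:
\begin{equs}
g(\tu(s,y)-u_n(t,x))&=g(\tu(s,y)-u_n(s,x))-\int_s^t g'(\tu-u_n(r,x))\sigma_n^j(x,u_n(r,x))\,d\beta^j(r)\\
&\quad-\int_s^t g'(\tu-u_n(r,x))\Delta A_n(u_n(r,x))\,dr+\tfrac12\int_s^t g''(\tu-u_n(r,x))|\sigma_n|_{l_2}^2\,dr.
\end{equs}
Substituting this into the definition of $F_\theta(t,x,u_n(t,x))$ (whose outer stochastic integral runs over $s\in[t-\theta,t]$ by \eqref{eq:F rewrite}) produces four terms. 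The first term gives a standard It\^o integral with $\mathcal F_s$-adapted integrand, so its expectation vanishes. The main contribution comes from the $d\beta^j(r)$ term, producing a double stochastic integral whose expectation --- by It\^o isometry on the diagonal $r=s$ when the indices $j,k$ match --- equals
\[
-\E\int_{[t-\theta,t]\times\T^d}\sigma_n^k(x,u_n(s,x))\tsigma^k(y,\tu(s,y))g'(\tu(s,y)-u_n(s,x))\phi_\theta(t,x,s,y)\,dy\,ds,
\]
which, integrated over $(t,x)$, is the cross term appearing on the right-hand side of \eqref{eq:strong-entropy} (in the intended applications $g=\eta_\delta'$ will be odd, so $g'$ is even and the sign of the argument is immaterial). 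The remaining ``drift'' and ``second order'' terms involve an inner integral over $[s,t]$ of length at most $\theta$; bounding them by Cauchy--Schwarz together with Lemma~\ref{lem:F} (taking $\lambda=\mu$, which gives an $L_\infty$ bound on $\D_a F_\theta$ of order $\theta^{-\mu}$) and using the uniform estimates \eqref{eq:uniform}--\eqref{eq:uniform-m+1} yields the $N\theta^{1-\mu}$ remainder.

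The main technical difficulty is that substituting $a=u_n(t,x)$ into $F_\theta(t,x,a)$ creates an anticipating integral, since $u_n(t,x)$ depends on all the noise in $[t-\theta,t]$ over which the outer integration of $F_\theta$ runs. The cleanest workaround is to invoke Corollary~\ref{cor}(ii): one works with the mollified substitute $\int_a F_\theta(t,x,a)\rho_\lambda(u_n(t,x)-a)\,da$, applies classical It\^o calculus for each $\lambda>0$, and then lets $\lambda\to 0$. Integration by parts in $a$ converts $\rho_\lambda'$ into a derivative on $F_\theta$, making the expansion above available in a rigorous form; the error introduced by the mollification is controlled by Lemma~\ref{lem:F} and vanishes as $\lambda\to 0$.

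For the final claim of $n$-uniformity, observe that all constants produced above depend on $u_n$ only through $\E\|u_n\|_{L_{m+1}(Q_T)}^{m+1}$ and $\E\|\nabla A_n(u_n)\|_{L_2(Q_T)}^2$, which are controlled uniformly in $n$ by \eqref{eq:uniform-m+1}, while one Cauchy--Schwarz step used to handle the stochastic increment $\int_{t-\theta}^t\sigma_n^j(x,u_n)\,d\beta^j$ squares the $L_2$-estimate $\E\sup_t\|u_n\|_{L_2}^2\lesssim 1+\E\|\xi\|_{L_2}^2$; this accounts exactly for the moment assumption $\E\|\xi\|_{L_2}^4<\infty$.
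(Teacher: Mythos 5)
Your strategy is essentially the paper's: regularise the anticipating substitution $a=u_n(t,x)$ via $\rho_\lambda$ as in Corollary \ref{cor}(ii), apply It\^o's formula to the resulting adapted quantity over $[t-\theta,t]$, identify the martingale cross term by It\^o's isometry, absorb the drift and the second-order correction into $N\theta^{1-\mu}$ using Lemma \ref{lem:F}, and locate the fourth-moment requirement in a Cauchy--Schwarz step in $\omega$ pairing an $L_2(\Omega)$ bound on derivatives of $F_\theta$ with a quantity quadratic in $u_n$. All of this matches the paper.

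There is, however, a genuine gap in your treatment of the drift term. You propose to bound $\int_s^t g'(\tu-u_n(r,x))\Delta A_n(u_n(r,x))\,dr$ directly by Cauchy--Schwarz and Lemma \ref{lem:F}. This fails for two related reasons: $u_n$ is only an $L_2(\T^d)$-valued process solving the equation weakly in space, so an It\^o expansion ``along the trajectory $r\mapsto u_n(r,x)$'' at a fixed point $x$ is not justified, and $\Delta A_n(u_n)$ is merely a distribution --- the a priori bounds \eqref{eq:uniform}--\eqref{eq:uniform-m+1} control only $\nabla A_n(u_n)$ in $L_2$, not the full Laplacian. The paper therefore introduces a second approximation, the spatial mollification $u_n^{(\gamma)}$ satisfying \eqref{eq:viscous smoothed equation} pointwise, and, crucially, integrates by parts in $x$ before estimating: one derivative lands on $F_\theta$ (harmless by the $W^k_{m+1}$ bounds of Lemma \ref{lem:F}), and the other term is proportional to $\nabla u_n^{(\gamma)}\cdot\nabla(A_n(u_n))^{(\gamma)}$, which converges to $|\nabla\Psi_n(u_n)|^2\in L_1$ using the ($n$-dependent) bound \eqref{eq:gradient} and \eqref{eq:uniform}. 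Without the spatial mollification and the integration by parts in $x$, the drift estimate does not close. A minor further point: before the $\rho_\lambda$-mollification, your ``double stochastic integral'' $\int_{t-\theta}^t\bigl[\int_s^t\cdots d\beta^j(r)\bigr]d\beta^k(s)$ is anticipating, so ``It\^o isometry on the diagonal'' is only a heuristic; in the mollified form the cross term arises as the expectation of a product of two adapted integrals over $[t-\theta,t]$, where the isometry is legitimate.
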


\begin{proof}
Fix $\theta>0$ small enough so that \eqref{eq:F rewrite} holds. To ease notation we drop the lower index in $F_\theta$.
We proceed by two approximations:  first, as in Corollary \ref{cor} (ii),  the substitution of $u_n(t,x)$ into
$F(t,x,\cdot)$ is smoothed, and second,  $u_n$ is regularised. 

For a function $f \in L_2(\bT^d)$ let  $f^{(\gamma)}:=(\rho_\gamma)^{\otimes d}\ast f$ denote   its mollification. 
Then,  $u_n^{(\gamma)}$ satisfies (pointwise) the equation
\begin{equ}\label{eq:viscous smoothed equation}
du_n^{(\gamma)}=\Delta (A_n(u_n))^{(\gamma)}\,dt+(\sigma^k_n(u_n))^{(\gamma)}\,d\beta^k(t).
\end{equ}
We note that 
\begin{equs}
& \Big|\E\int_{t,x,a} F(t,x, a) \rho_\lambda(u_n(t,x)-a)- \E\int_{t,x,a} F(t,x, a) \rho_\lambda(u_n^{(\gamma)}(t,x)-a)\Big|
\\
&=\Big|\E\int_{t,x,a}\big(F(t,x,a)-F(t,x,a+u_n^{(\gamma)}(t,x)-u_n(t,x))\big)\rho_\lambda(u_n(t,x)-a)\Big|
%\\
%\leq &N  \E \int_{t,x} |v(t,x)-v^{(\gamma)}(t,x))|  \int_a \int_0^1 F_\theta(t,x,a) \partial_a\rho_\lambda(v-a+\vartheta (v-v^{(\gamma)}))\, d\vartheta
%\\
%\leq &N  \E \int_{t,x} |u_\eps(t,x)-u_\eps^{(\gamma)}(t,x))| \|\D_a F_\theta\|_{L_\infty(Q_T\times\R)} 
\\
&\leq N  \left( \E \|u_n-u_n^{(\gamma)} \|_{L_1(Q_T)}^2\right)^{1/2}  \,  \left( \E\|\D_a F\|_{L_\infty(Q_T\times\R)}^2
\right)^{1/2}  \to 0, \label{eq:0gamma limit}
\end{equs}
as $\gamma \to 0$.
By \eqref{eq:F rewrite} we have  $\E F(t,x,a)X=0$ for any $\cF_{t-\theta}$-measurable bounded random variable $X$. Hence, 
\begin{equs}
\E &F(t,x, a) \rho_\lambda(u_n^{(\gamma)}(t,x)-a)
\\
&=\E F(t,x, a) [\rho_\lambda(u_n^{(\gamma)}(t,x)-a)-\rho_\lambda(u_n^{(\gamma)}(t-\theta,x)-a)].
\end{equs}
 By \eqref{eq:viscous smoothed equation} and It\^o's formula one has
\begin{equs}
%&\E\int_{t,x,a} F(t,x, a) [\rho_\lambda(v(t,x)-a)-\rho_\lambda(v(t-\theta,x)-a)]
%\\
%&= 
%\lim_{\gamma \to 0}
&\int_{t,x,a} F(t,x, a) \big(\rho_\lambda (u_n^{(\gamma)} (t,x)-a)-\rho_\lambda(u_n^{(\gamma)}(t-\theta,x)-a)\big)
\\
&=
%\overline{\lim_{\gamma \to 0}} 
\int_{t,x,a} F(t,x, a) \int_{t-\theta}^t \rho^{\prime}_\lambda  (u_n^{(\gamma)} (s,x)-a)
 \Delta (A_n(u_n))^{(\gamma)} \, ds 
\\
&\quad+
% \overline{\lim_{\gamma \to 0}} 
\int_{t,x,a} F(t,x, a) \int_{t-\theta}^t \rho^{\prime}_\lambda  (u_n^{(\gamma)} (s,x)-a) (\sigma^k(x,u_n(s,x)))^{(\gamma)} \, d\beta^k(s) 
\\
&\quad+
% \overline{\lim_{\gamma \to 0}} 
\int_{t,x,a} F(t,x, a) \frac{1}{2}\int_{t-\theta}^t \rho^{\prime\prime}_\lambda  (u_n^{(\gamma)} (s,x)-a) \sum_{k=1}^\infty|(\sigma^k(x,u_n(s,x)))^{(\gamma)}|^2 \, ds 
\\&=: C^{(1)}_{\lambda,\gamma}+C^{(2)}_{\lambda,\gamma}+C^{(3)}_{\lambda,\gamma}.        \label{eq:after Ito}
\end{equs}
By \eqref{eq:F rewrite} and integration by parts (in $x$) we have 
\begin{equs}
-C^{(1)}_{\lambda,\gamma}&= \int_{t,x,a}I_{t> \theta}\int_{t-\theta}^t  \nabla_x F(t,x, a)\rho^{\prime}_\lambda  (u_n^{(\gamma)} (s,x)-a)\nabla(A_n(u_n))^{(\gamma)}
\\
&\quad+ F(t,x, a)\rho^{\prime\prime}_\lambda  (u_n^{(\gamma)} (s,x)-a) \nabla u_n^{(\gamma)}(s,x) 
\nabla(A_n(u_n))^{(\gamma)}\, ds 
%\\
%&= \E\int_{t,x,a}\int_{t-\theta}^t  \nabla F(t,x, a)\rho^{\prime}_\lambda  (u_\eps (s,x)-a) 
%\\
%& \times  [\nabla (u_\eps^{m})(s,x) + \eps \nabla u_\eps(s,x)]\, ds 
%\\
%&+\E\int_{t,x,a}\int_{t-\theta}^t  F(t,x, a)\rho^{\prime\prime}_\lambda  (u_\eps (s,x)-a) 
%\\
%& \times \left( | \nabla |u_\eps|^{\frac{m+1}{2}}(s,x) |^2 +\eps|\nabla u_\eps(s,x)|^2\right) \, ds 
\\
& =: C^{(11)}_{\lambda,\gamma}+C^{(12)}_{\lambda,\gamma} .
\end{equs}
\bigskip
Note that
\begin{equ}            \label{eq:whole-theta}
\int_\theta^T\int_{t-\theta}^t|f(s)|\,ds\,dt\leq\theta\int_{0}^T|f(s)|\,ds.
\end{equ}
Hence, after integration by parts with respect to $a$, by the Cauchy-Schwarz inequality and Lemma \ref{lem:F}, we have
\begin{equs}
\E|C^{(11)}_{\lambda,\gamma}|
&=\E\big|\int_{t,x,a}I_{t> \theta}\int_{t-\theta}^t\nabla_x\partial_a F(t,x,a)\rho_\lambda(u_n^{(\gamma)}(s,x)-a)\nabla(A_n(u_n))^{(\gamma)}\,ds\big|
\\
&\leq N\theta \left( \E\|\nabla_x\partial_a F\|^2_{L_\infty(Q_T\times\R)} \right) ^{1/2}
\left( \E\| \nabla A_n(u_n)\|^2_{L_1(Q_T)} \right) ^{1/2}
\\
&\leq N(n)\theta^{1-\mu}.                    \label{eq:A1}
\end{equs}
Similarly, this time integrating by parts twice in $a$ we have 
\begin{equs}
\E&|C^{(12)}_{\lambda,\gamma}|
\leq
N\theta^{1-\mu}
\left( \E
\| \nabla u_n^{(\gamma)}\nabla (A_n(u_n))^{(\gamma)}\|_{L_1(Q_T)}^{\frac{m+1}{m}} \right)^{\frac{m}{m+1}}.
\end{equs}
To bound the right-hand side, note that by \eqref{eq:gradient},
$\nabla u_n^{(\gamma)}\to\nabla u_n$ in $L_p(\Omega;L_2(Q_T))$, for any $p$,
and by \eqref{eq:uniform-m+1},
$\nabla (A_n(u_n))^{(\gamma)}\to\nabla A_n(u_n)$
 in $L_2(\Omega;L_2(Q_T))$. Therefore,
\begin{equs}
\lim_{\gamma\to 0}\E
\| \nabla u_n^{(\gamma)}\nabla (A_n(u_n))^{(\gamma)}\|_{L_1(Q_T)}^{\frac{m+1}{m}}
&=\E\| \nabla u_n\nabla A_n(u_n)\|_{L_1(Q_T)}^{\frac{m+1}{m}}
\\& 
=\E\|\nabla \Psi_n(u_n)\|_{L_2(Q_T)}^{\frac{2(m+1)}{m}}\leq N(n). \label{eq:A2}
\end{equs}
Together with \eqref{eq:A1}, we therefore get
\begin{equ}\label{eq:A}
\limsup_{\gamma\to 0}\E|C^{(1)}_{\lambda,\gamma}|\leq N(n) \theta^{1-\mu}.
\end{equ}
To bound $C^{(3)}_{\lambda,\gamma}$, we proceed similarly:
 integrate by parts twice with respect to $a$, use Lemma \ref{lem:F}, and \eqref{eq:whole-theta} to get 
\begin{equation}\label{eq:C}
\E|C^{(3)}_{\lambda,\gamma}|
\leq N\theta\left( \E\|\partial_{aa} F\|^2_{L_\infty(Q_T\times\R)}\right) ^{1/2}\left( \E\big\||\sigma_n(u_n)\big|_{l_2}\|^4_{L_2(Q_T)}\right)^{1/2} \leq N(n)\theta^{1-\mu}.
\end{equation}
Next, one has by It\^o's isometry 
\begin{equs}
\E C^{(2)}_{\lambda,\gamma}&=\E\int_{a,t,x,y}\int_{t-\theta}^t
\tsigma^k(y,\tu(s,y))\big(\sigma^k_n(x,u_n(s,x))\big)^{(\gamma)}g(\tu(s,y)-a)
\\
&\quad\quad\times\rho_\lambda'(u_n^{(\gamma)}(s,x)-a)\phi_\theta(t,x,s,y)\,ds
\\
&=-\E\int_{a,t,x,y}\int_{t-\theta}^t
\tsigma^k(y,\tu(s,y))\big(\sigma^k_n(x,u_n(s,x))\big)^{(\gamma)}g'(\tu(s,y)-a)
\\
&\quad\quad\times\rho_\lambda(u_n^{(\gamma)}(s,x)-a)\phi_\theta(t,x,s,y)\,ds.
\end{equs}
Now the passage to the $\gamma\to 0$ limit is straightforward.
Passing to the $\lambda\rightarrow0$ limit is also quite immediate, since, 
\begin{equs}\label{eq:B limit lambda}
\E&|C^{(2)}_{\lambda,0}-C^{(2)}_{0,0}|
\\&\leq\lambda\sup_a|g''(a)|\E\int_{t,x,y}\int_{t-\theta}^t|\tsigma(y,\tu(s,y))|_{l_2}|\sigma_n(x,u_n(s,x))\phi_\theta(t,x,s,y)|_{l_2}\,ds.
\end{equs}
Putting all of \eqref{eq:0lambda limit}, \eqref{eq:0gamma limit}, \eqref{eq:after Ito}, \eqref{eq:A}, \eqref{eq:C}, and \eqref{eq:B limit lambda} together, we conclude
\begin{equs}
\E &\int_{t,x} F(t,x, v(t,x))
\\
&\leq
\limsup_{\lambda\to0}\limsup_{\gamma\to 0}\E\big(
|C^{(1)}_{\lambda,\gamma}|+|C^{(3)}_{\lambda,\gamma}|\big) 
+ \lim_{\lambda\to0}\lim_{\gamma\to 0} \E C^{(2)}_{\lambda,\gamma}
\\
&\leq N(n) \theta^{1-\mu}
 -  \E \int_{s,t,x,y} \tsigma^k(y, \tilde{u}(s,y))\sigma^k_n(x, u_n(s,x)) g'(\tilde{u}(s,y)-u_n(s,x))\phi_{\theta},
\end{equs}
as claimed. Moreover, if $\E\|\xi\|^4_{L_2(\bT^d)}< \infty$, then by virtue of \eqref{eq:uniform} and \eqref{eq:uniform-m+1} it is clear that in  \eqref{eq:A1}, \eqref{eq:A2}, and \eqref{eq:C}, we can choose $N$ independent of $n \in \bN$, which completes the proof.
\end{proof}

We are ready to proceed with the proof of our main theorem.
\begin{proof}[Proof of Theorem \ref{thm:main}]
We only give the details for the case that $\sigma$ satisfies Assumption \ref{as:noise} \eqref{as:sigma}. The statement in the case of Assumption \ref{as:noise} \eqref{as:sigma sqrt} is obtained analogously, making use of Theorem \ref{thm:uniqueness sqrt} instead of Theorem \ref{thm:uniqueness}.

\emph{Existence:} First assume that 
$\E \|\xi\|_{L_2(\bT^d)}^4< \infty$.
 We will show that $(u_n)_{n\in\N}$ is a Cauchy sequence in $L_1(\Omega_T;L_1(\T^d))$.
Set $\eps_0>0$ arbitrary.
As in the conclusion of the proof of Theorem \ref{thm:uniqueness}, we choose $\nu$ such that
$\nu \in ((m\wedge2)^{-1},\bar\kappa)$ and then we choose $\alpha<1\wedge(m/2)$ such that
$-2+(2\alpha)(2\nu)>0$.
Then, we apply Theorem \ref{thm:uniqueness} (ii) to $u_n$ and $u_{n'}$, for arbitrary $n\leq n'$, setting
$\delta=\eps^{2\nu}$, and $\lambda=8/n$. Thanks to \eqref{eq:approx A}, we have that $R_\lambda\geq n$.
Recalling the uniform estimates \eqref{eq:uniform}, and the triangle inequality
\begin{equ}
\E\|\xi_{n'}(\cdot)-\xi_{n'}(\cdot+h)\|_{L_1(\T^d)}\leq
\E\|\xi(\cdot)-\xi(\cdot+h)\|_{L_1(\T^d)}+2\E\|\xi-\xi_{n'}\|_{L_1(\T^d)},
\end{equ}
the right-hand side of \eqref{eq:super inequality} is bounded by
\begin{equs}
M(\eps)&+N\E\|\xi-\xi_{n'}\|_{L_1(\T^d)}+N \E\|\xi-\xi_{n}\|_{L_1(\T^d)}
\\
&+N\eps^{-2 \nu} \red{\sup_{x,r}\frac{|\sigma_n(x,r)-\sigma_{n'}(x,r)|_{l_2}^2}{(1+|r|)^{m+1}} }+N\eps^{-2}n^{-2}
\\
&+N\eps^{-2}\E\big(\|I_{|u_n|\geq n}(1+|u_n|)\|_{L_m(Q_T)}^m+
\|I_{|u_{n'}|\geq n}(1+|u_{n'}|)\|_{L_m(Q_T)}^m\big),
\end{equs}
where $M(\eps)\to 0$ as $\eps\to 0$. Let $\eps>0$ be such that $M(\eps)\leq\eps_0$.
By \eqref{eq:approx sigma xi}, one can then choose $n_0$ sufficiently large so that for $n_0\leq n\leq n'$ the second through fifth terms above are each bounded by $\eps_0$.
The same is true for the last term, thanks to the uniform integrability (in $(\omega,t,x)$) of $1+|u_n|^m$, which in turn follows from \eqref{eq:uniform-m+1}. Hence, indeed, for $n_0\leq n\leq n'$, one has
\begin{equ}
\E \int_{t,x} |u_n(t,x)-u_{n'}(t,x)|\leq 6\eps_0.
\end{equ}
Therefore, $(u_n)_{n\in\N}$ converges in $L_1(\Omega_T; L_1(\T^d))$ to a limit $u$.
 Moreover, by passing to a subsequence, we may also assume that 
 \begin{equation}               \label{eq:ae-convergence}
 \lim_{n \to \infty}u_n=u, \qquad \text{for almost all} \qquad (\omega, t, x)\in \Omega_T \times \bT^d. 
 \end{equation}
Consequently, by Lemma \ref{lem:strong entropy}, \eqref{eq:uniform-m+1},  and Corollary \ref{cor} (i), we have that $u$ has the $(\star)$-property with coefficient $\sigma$. 
In addition, it follows by \eqref{eq:uniform-m+1} that  for  any $q < m+1$, 
\begin{equation}            \label{eq:uniform-integrability}
(|u_n(t,x)|^q)_{n=1}^\infty \ \text{is uniformly integrable on $\Omega_T \times \bT^d$}.
\end{equation} 

We now show that $u$ is an entropy solution.  From now on, when we refer to the estimates \eqref{eq:uniform}, we only use them with $p=2$. 
 By the estimates in \eqref{eq:uniform-m+1}, it follows that $u$ satisfies \eqref{item:in-Lm} from Definition \ref{def:solution}.
 
 For $f \in C_b(\bR)$ and $\eta$ as in Definition \ref{def:solution} we define $\Psi_{n,f}$ and $q_{n,\eta}$ analogously to $\Psi_f$ and $q_\eta$, but with $\mathfrak{a}_n$ in place of $\mathfrak{a}$.    For each $n$, we clearly have $\Psi_{n,f}(u_n) \in L_2(
\Omega_T; W^1_2(\bT^d))$ and $\D_i \Psi_{n,f}(u_n)= f(u_n) \D_i \Psi_n(u_n)$. Also, we have $|\Psi_{f,n}(r)|\leq \|f\|_{L_\infty} 3K |r|^{(m+1)/2}$  for all $r \in \bR$, which combined with  \eqref{eq:uniform} and \eqref{eq:uniform-m+1} gives that that
\begin{equation*}
\sup_n \E\int_t \|\Psi_{n,f}(u_n)\|_{W^1_2(\bT^d)}^2 \, < \infty. 
\end{equation*} 
Hence, for a subsequence we have  $\Psi_{n,f}(u_n) \wto v_f$, $\Psi_n(u_n) \wto v$ for some $v_f, v \in L_2(\Omega_T; W^1_2(\bT^d))$. By \eqref{eq:approx A} and  \eqref{eq:ae-convergence}-\eqref{eq:uniform-integrability} it is easy to see that 
$v_f= \Psi_f(u)$, $ v= \Psi(u)$. Moreover, for any $\phi \in C^\infty(\bT^d)$, $B \in \mathcal{F}$, we have 
\begin{equs}
\E I_B \int_{t,x}  \D_i \Psi_f(u) \phi \,  &= \lim_{n \to \infty} \E I_B \int_{t,x} \D_i \Psi_{f,n}(u_n) \phi \, 
\\
&= \lim_{n \to \infty} \E I_B \int_{t,x} f(u_n)\D_i\Psi_n(u_n) \phi \, 
\\
&= \E I_B \int_{t,x} f(u)\D_i\Psi (u) \phi \, ,
\end{equs}
where for the last equality we have used that $ \D_i\Psi_n(u_n) \wto  \D_i\Psi(u)$ (weakly) and $f(u_n) \to f(u)$ (strongly) in $L_2(\Omega_T; L_2(\bT^d))$. Hence, \eqref{item:chain_ruleW2} from Definition \ref{def:solution} is also satisfied. We now show \eqref{item:entropies}. Let $\eta$ and $\phi$ be as in \eqref{item:entropies} and let $B \in \mathcal{F}$. By It\^o's formula (see, e.g., \cite{K_Ito}) for the function 
$$
u \mapsto \int_x \eta(u)\varrho,
$$
 and It\^o's product rule, we have 
\begin{equs}            
\nonumber          
-\E I_B \int_{t,x} \eta(u_n)\D_t\phi \, 
&\leq \E I_B \left[ \int_x\eta(\xi_n)\phi(0)+\int_{t,x} q_{n,\eta}(u_n) \Delta \phi  \,  \right.
\\
\nonumber   
&-\int_{t,x}\phi \eta''(u_n) | \nabla \Psi_n(u_n)|^2 \,  +\frac{1}{2}\int_{t,x} \phi \eta''(u_n)|\sigma_n(u_n)|_{l_2}^2 \, 
\\   \label{eq:un-entropy-inequality}
&+ \left. \int_0^T \int_x \phi \eta'(u_n)\sigma_n^k(u_n) \, d\beta^k(t)  \right].
\end{equs}
On the basis of \eqref{eq:ae-convergence}-\eqref{eq:uniform-integrability} and the construction of $\xi_n$, $\sigma_n$ and $\mathfrak{a}_n$ it is easy to see that 
\begin{equs}
&\lim_{n \to \infty} \E I_B \int_{t,x} \eta(\xi_n) \D_t\phi \,  =\E I_B \int_{t,x} \eta(\xi) \D_t\phi \, 
\\
&\lim_{n \to \infty} \E I_B \int_{t,x} \eta(u_n) \D_t\phi \,  =\E I_B \int_{t,x} \eta(u) \D_t\phi \, 
\\
&\lim_{n \to \infty}\E I_B \int_{t,x} q_{n,\eta}(u_n) \Delta \phi  \,  = \E I_B \int_{t,x} q_\eta(u) \Delta \phi \, 
\\
& \lim_{n \to \infty}\E I_B \int_{t,x} \phi \eta^{\prime \prime} (u_n)|\sigma_n(u_n)|^2 \,  =\E I_B \int_{t,x} \phi \eta^{\prime \prime} (u)|\sigma(u)|^2 
\\
& \lim_{n \to \infty}\E I_B \int_0^T \int_x \phi \eta^{\prime} (u_n)\sigma^k_n(u_n)\, d\beta^k(t)=\E I_B \int_0^T \int_x \phi \eta^{\prime}(u)\sigma^k(u) \, d\beta^k(t).
\end{equs}
Let us set $\tilde{f}(r):= \sqrt{\eta''(r)}$. Notice that $\D_i \Psi_{\tilde{f},n}(u_n)= \sqrt{\eta''(u_n)} \D_i \Psi_n(u_n)$. As before we have (after passing to a subsequence if necessary)  $\D_i \Psi_{\tilde{f},n}(u_n) \wto \D_i \Psi_{\tilde{f}}(u)$ in $L_2(\Omega_T ;L_2(\bT^d))$. In particular, this implies that $\D_i \Psi_{\tilde{f},n}(u_n) \wto \D_i \Psi_{\tilde{f}}(u)$ in $L_2(\Omega_T\times \bT^d , \bar\mu)$, where $d \bar\mu:= I_B\phi  \  d\bP\otimes dx \otimes dt $. This implies that 
\begin{equs}
\E I_B\int_{t,x}\phi \eta''(u) | \nabla \Psi(u)|^2\,  \leq \liminf_{n \to \infty} \E I_B \int_{t,x} \phi \eta''(u_n) | \nabla \Psi_n(u_n)|^2\, .
\end{equs}
Hence, taking $\liminf$ in \eqref{eq:un-entropy-inequality} along an appropriate subsequence, we see that $u$ satisfies also \eqref{item:entropies}.

 To summarise, we have shown that if in addition to the assumptions of Theorem \ref{thm:main} we have that $\E\| \xi\|_{L_2(\bT^d)}^4< \
\infty$, then there exists an entropy solution to \eqref{eq:main_equation} which has the $(\star)$-property with coefficient $\sigma$ (therefore, it is also unique by Theorem \ref{thm:uniqueness}). In addition, we can pass to the limit in \eqref{eq:uniform}-\eqref{eq:uniform-m+1} to obtain that 
\begin{equation}   \label{eq:uniform2}
\begin{aligned}
\E \sup_{t \leq T} \| u\|_{L_2(\bT^d)}^2 + \E \|\nabla \Psi  (u) \|_{L_2(Q_T)}^2 &\leq N ( 1+ \E \|\xi\|_{L_2(\bT^d)}^p),
\\
\E \sup_{t \leq T} \|u\|_{L_{m+1}(Q_T)}^{m+1}+ \E \| \nabla A(u)\|_{L_2(Q_T)}^2 &\leq N(1+\E \|\xi\|_{L_{m+1}(\bT^d)}^{m+1}),
\end{aligned}
\end{equation}
with a constant $N$ depending only on $d,K,T$ and $m$.

We now remove the extra condition on $\xi$. For $n \in \bN$, let $\xi_n$ be as in \eqref{eq:apr-xi} and let $u_{(n)}$ be the unique solution of $\mathcal{E}(A, \sigma, \xi_{n})$. 
Notice that $u_{(n)}$ has the $(\star)$-property with coefficient $\sigma$. Hence, by Theorem \ref{thm:uniqueness} (i) we have that $(u_{(n)})$ is Cauchy in $L_1(\Omega_T;L_1(\bT^d))$ and therefore has a limit $u$. In addition, $u_{(n)}$ satisfy the estimates \eqref{eq:uniform2} uniformly in $n \in \bN$. With the arguments provided above it is now  routine to show that 
$u$ is an entropy solution. 

\emph{Uniqueness:} We finally show \eqref{eq:main contraction} which also implies uniqueness. Let $\tilde{u}$ be an entropy solution of $\mathcal{E}(A,\sigma, \tilde{\xi})$. By Theorem \ref{thm:uniqueness} we have 
$$
\esssup_{t\in[0,T]}\E\int_x|u_{(n)}(t,x)-\tu(t,x)|\leq\E\int_x|\xi^n(x)-\txi(x)|,
$$
where $u_{(n)}$ are as above. We then let $n \to \infty$ 
to finish the proof.
\end{proof}
\begin{proof}[Proof of Theorem \ref{thm:stability}]
For each $n\in\N$, let $\hat \xi_n=-N_0\vee(\xi_n\wedge N_0)$, where $N_0=N_0(n)$ is chosen so that
$\E\|\hat\xi_n-\xi_n\|_{L_1(\T^d)}\leq 1/n$.
Furthermore, let $\hat u_n$ denote the entropy solution of $\cE(A_n,\sigma_n,\hat\xi_n)$, which, by the preceding, exists, is unique, and has the $(\star)$-property.
Since by \eqref{eq:main contraction} we have that
\begin{equ}
\E\|\hat u_n-u_n\|_{L_1(Q_T)}\leq T/n,
\end{equ}
it suffices to check that $\hat u_n\to u$ in $L_1(\Omega_T\times\T^d)$. This however follows from the bounds \eqref{eq:super inequality}, \eqref{eq:super inequality 2}, precisely as in the previous proof.
\end{proof}

\section*{Acknowledgment}
B. Gess acknowledges financial support by the DFG through the CRC 1283 ``Taming uncertainty and profiting from randomness and low regularity in analysis, stochastics and their applications".

%It now suffices to show that $u$ is an entropy solution of $\cE(A,\sigma,\xi)$: indeed, this implies by Theorem \ref{thm:uniqueness} (i) that any entropy solution $\tu$ coincides with $u_\infty$.
%Similarly, the estimate \eqref{eq:main contraction} is an immediate consequence of \eqref{eq:L_1 contraction}.

%Part (i) of Definition \ref{def:solution} is straightforward by the estimates...

\def\cprime{$'$}

\end{document}